\def\today{\ifcase\month\or
  January\or February\or March\or April\or May\or June\or
  July\or August\or September\or October\or November\or December\fi
  \space\number\day, \number\year}
 \newtheorem{theorem}{Theorem}
 \newtheorem{lemma}[theorem]{Lemma}
 \newtheorem{corollary}[theorem]{Corollary}
 \theoremstyle{definition}
 \theoremstyle{remark}
 \newcommand{\C}{\mathbb{C}}
 \newcommand{\R}{\mathbb{R}}
 \newcommand{\hn}{\tfrac32}
  \newcommand{\hh}{\tfrac12}
  \renewcommand{\d}{\text{\rm d}}
 \newcommand{\du}{\text{\rm d}u}
 \newcommand{\dx}{\text{\rm d}x}
\newcommand{\im}{{\rm Im}\,}
\newcommand{\re}{{\rm Re}\,}
\newcommand{\dd}{\,{\rm d}}
\begin{document}
\title[A note on entire $L$-functions]{A note on entire $L$-functions}
\author[Chirre]{Andr\'{e}s Chirre}
\subjclass[2010]{11M06, 11M26, 11M36, 41A30}
\keywords{L-function, generalized Riemann hypothesis, logarithm, argument, extremal functions, exponential type.}
\address{IMPA - Instituto Nacional de Matem\'{a}tica Pura e Aplicada - Estrada Dona Castorina, 110, Rio de Janeiro, RJ, Brazil 22460-320}
\email{achirre@impa.br}

\allowdisplaybreaks
\numberwithin{equation}{section}

\maketitle

\begin{abstract}
	In this paper, we exhibit upper and lower bounds with explicit constants for some objects related to entire $L$-functions in the critical strip, under the generalized Riemann hypothesis. The examples include the entire Dirichlet $L$-functions $L(s,\chi)$ for primitive characters $\chi$.
\end{abstract}

\section{Introduction}

Recently, new estimates for some objects related to $L$-functions have been given. In particular, we have estimates for the Riemann zeta-function  \cite{CC, CCM, CCM2, CChi, CChiM, CS} under the Riemann hypothesis. For a general family of $L$-functions in the framework of \cite[Chapter 5]{IK}, we have similar estimates in the critical line \cite{CCM2, CChi, CF, CS} under the generalized Riemann hypothesis. The purpose of this paper is to exhibit explicit bounds for a family of entire $L$-functions in the critical strip. The proof of these estimates is motivated by the ideas of Carneiro and Chandee \cite{CC}, and Carneiro, Chirre and Milinovich \cite{CChiM} on the use of the Guinand-Weil explicit formula applied to special functions with compactly supported Fourier transforms.

\subsection{Entire $L$-functions} In this paper we study a family of entire $L$-functions that includes the Dirichlet series $L(s,\chi)$ for non-principal primitive characters $\chi$. Similar families of $L$-functions are studied in \cite{MG, NP}. We adopt the notation
$$\Gamma_\mathbb R(s):=\pi^{-s/2}\,\Gamma\left(\frac{s}{2}\right),$$ 
where $\Gamma$ is the usual Gamma function. Throughout this paper we consider that an entire function $L(s,\pi)$ meets the following requirements (for some positive integer $d$):

\medskip

\noindent(i) There exists a sequence $\{\lambda_\pi(n)\}_{n\ge1}$ of complex numbers ($\lambda_\pi(1) =1$) such that the series $$\sum_{n=1}^\infty\frac{\lambda_\pi(n)}{n^s}$$ converges absolutely to $L(s,\pi)$ on $\{s\in\mathbb C \,;\,\text{Re}\,s>1\}$.

\medskip

\noindent(ii) For each prime number $p$, there exist $\alpha_{1,\pi}(p),\alpha_{2,\pi}(p),\ldots,\alpha_{d,\pi}(p)$ in $\mathbb C$ such that $|\alpha_{j,\pi}(p)|\leq1$, and 
$$L(s,\pi)=\prod_p\prod_{j=1}^d\left(1-\frac{\alpha_{j,\pi}(p)}{p^s}\right)^{-1},$$
where the infinite product converges absolutely on $\{s \in \C; \text{Re}\,s>1\}$.

\medskip

\noindent(iii) For some positive integer $N$ and some complex numbers $\mu_1,\mu_2,\ldots,\mu_d$ with nonnegative real parts and such that $\{\mu_1,\mu_2,\ldots,\mu_d\}=\{\overline{\mu_1},\overline{\mu_2},\ldots,\overline{\mu_d}\}$, the complete $L$-function 
$$\Lambda(s,\pi):=N^{s/2}\prod_{j=1}^d \Gamma_\mathbb R(s+\mu_j)L(s,\pi)$$
is an entire function of order 1 having no zeros in $0$ and $1$. Furthermore, the function $\Lambda(s, \tilde\pi):=\overline{\Lambda(\overline s, \pi)}$ satisfies the functional equation
\begin{equation*}
\Lambda(s,\pi)=\kappa\,\Lambda(1-s,\tilde\pi),
\end{equation*}
for some complex number $\kappa$ of norm $1$. 

\smallskip

 Now, by using the product expansion of $L(s,\pi)$ and the inequality $|\alpha_{j,\pi}(p)|\leq 1$ we obtain that \footnote{Throughout the paper we use the notation $f \ll g$ to mean that for a certain constant $C>0$ we have $f(t) \leq Cg(t)$ for $t\in Dom(f) \cap Dom(g)$. In the subscript we indicate the parameters in which such constant $C$ may depend on.}
\begin{equation}\label{L_fun_eq1}
|\log|L(s,\pi)||\leq d\log\zeta({\rm Re}\,s)\ll\dfrac{d}{2^{\re{s}}}
\end{equation} 
for any $s$ with ${\rm Re}\,s\geq\frac{3}{2}$. Besides, we have that \begin{equation}\label{L_fun_eq2}
	\frac{L'}{L}(s,\pi)=-\sum_{n=2}^\infty\frac{\Lambda_\pi(n)}{n^s}
\end{equation}
converges absolutely if ${\rm Re}\,s>1$, and $\Lambda_\pi(n)=0$ if $n$ is not a power of prime and $\Lambda_\pi(p^k)=\sum_{j=1}^d\alpha_{j,\pi}(p)^k\log p$ if $p$ is prime and $k$ is a positive integer. Thus
\begin{equation}\label{1L_fun_eq3}
	\big|\Lambda_\pi(n)\big| \leq d \,\Lambda(n).
\end{equation}

\bigskip

\section{Main results} Let $n\geq 0$ be an integer, $\hh\leq\sigma\leq1$ be a real parameter, and $L(s,\pi)$ be an entire $L$-function in the above setting. For $t\in\R$ (and $t$ not coinciding with the ordinate of a zero of $L(s,\pi)$ when $n=0$) we define the {\it iterates of the argument function} as
\[
S_{n,\sigma}(t,\pi):=-\dfrac{1}{\pi}\im\bigg\{\dfrac{i^n}{n!}\int_{\sigma}^{\infty}(u-\sigma)^n\, \dfrac{L'}{L}(u+it,\pi)\,\du\bigg\}.
\]
If $t$ is the ordinate of a zero of $L(s,\pi)$ when $n=0$ we define
\[
S_{0,\sigma}(t,\pi):=\displaystyle\lim_{\varepsilon\to 0}\dfrac{S_{0,\sigma}(t+\varepsilon,\pi)+S_{0,\sigma}(t-\varepsilon,\pi)}{2}.
\]
Differentiating under the integral sign and using integration by parts, one can see that $S'_{n,\sigma}(t,\pi)= S_{n-1,\sigma}(t,\pi)$ for $t\in\R$ (in the case $n=1$ we may restrict ourselves to the case when t is not the ordinate of a zero of $L(s,\pi)$). We finally define  
\begin{equation*}
S_{-1,\sigma}(t,\pi): = \dfrac{1}{\pi}\re{\dfrac{L'}{L}(\sigma+it,\pi)},
\end{equation*}
when $t$ is not the ordinate of a zero of $L(s,\pi)$. We can see that $S'_{0,\sigma}(t,\pi)=S_{-1,\sigma}(t,\pi)$, when $t$ is not the ordinate of a zero of $L(s,\pi)$.

\medskip

Theorem \ref{teo_prin} below provides estimates for the above mentioned objects and for the logarithm of the modulus of $L(s,\pi)$ in the critical strip. These results are based on the generalized Riemann hypothesis, which states that $\Lambda(s,\pi)\neq 0$ if $\re{s}\neq \hh$. As in \cite{CCM2, CChi, CF, CS}, the {\it analytic conductor} of $L(s,\pi)$, which is defined by $$C(t,\pi)=N\prod_{j=1}^d(|it+\mu_j|+3),$$
will appear in our results. For an integer $n\geq0$ we introduce the function
\begin{equation*}
H_n(x):=\displaystyle\sum_{k=0}^{\infty}\dfrac{x^k}{(k+1)^n}.
\end{equation*}
In particular, when $0<|x|<1$ we have that 
\begin{align} \label{25_10_1:52am} 
\dfrac{\log(1\pm x)}{x} = \pm H_1(\mp x).
\end{align}

\medskip

\begin{theorem}  \label{teo_prin} Let $L(s,\pi)$ be an entire $L$-function satisfying the generalized Riemann hypothesis. Let $c>0$ be a given real number. Then, for $\hh<\sigma<1$ and $t\in\R$ in the range  $$(1-\sigma)^2\log\log C(t,\pi)\geq c,$$ we have the following uniform bounds:
\smallskip

\noindent{\rm (i)} For the logarithm, \footnote{Throughout the paper we use the notation $f = O(g)$ to mean that for a certain constant $C>0$ we have $|f(t)|\leq Cg(t)$ for $t$ sufficiently large. In the subscript we indicate the parameters in which such constant $C$ may depend on.}
	\begin{align*}
	-M^{-}_{\sigma}(t)\dfrac{(\log C(t,\pi))^{2-2\sigma}}{\log\log C(t,\pi)} +  & O_c\bigg(\dfrac{d\,\mu(\sigma)\,(\log C(t,\pi))^{2-2\sigma}}{(1-\sigma)^{2}(\log\log C(t,\pi))^2}\bigg) \leq \log|L(\sigma+it,\pi)| \\
	& \leq  
	M^{+}_{\sigma}(t)\dfrac{(\log C(t,\pi))^{2-2\sigma}}{\log\log C(t,\pi)} +  O_c\bigg(\dfrac{d\,(\log C(t,\pi))^{2-2\sigma}}{(1-\sigma)^{2}(\log\log C(t,\pi))^2}\bigg).
	\end{align*}
\noindent{\rm (ii)}	For $n\geq -1$ an integer,
	\begin{align*}
	-M^{-}_{n,\sigma}(t)\dfrac{(\log C(t,\pi))^{2-2\sigma}}{(\log\log C(t,\pi))^{n+1}} +  & O_c\bigg(\dfrac{d\,\mu^{-}_{n,d}(\sigma)\,(\log C(t,\pi))^{2-2\sigma}}{(1-\sigma)^{2}(\log\log C(t,\pi))^{n+2}}\bigg)\leq S_{n,\sigma}(t,\pi) \\
	& \leq  
	M^{+}_{n,\sigma}(t)\dfrac{(\log C(t,\pi))^{2-2\sigma}}{(\log\log C(t,\pi))^{n+1}} +  O_c\bigg(\dfrac{d\,\mu^{+}_{n,d}(\sigma)\,(\log C(t,\pi))^{2-2\sigma}}{(1-\sigma)^{2}\,(\log\log C(t,\pi))^{n+2}}\bigg).
	\end{align*}
The functions appearing above are given by
\begin{itemize}
	\item For the logarithm,
	\begin{equation*}
	M_{\sigma}^{\pm}(t) = \frac{1}{2}\left( H_{1}\Big(\mp(\log C(t,\pi))^{1 - 2\sigma}\Big) + \frac{d\,(2\sigma -1)}{\sigma (1- \sigma)}\right) \hspace{0.2cm} \mbox{and}\hspace{0.2cm} \mu(\sigma)=\dfrac{|\log(\sigma-\hh)|}{\sigma-\hh}.
	\end{equation*}
	\item For $n\geq1$ odd,
	\begin{equation*}
	M_{n,\sigma}^{\pm}(t) = \frac{1}{2^{n+1}\pi}\left( H_{n+1}\Big(\pm (-1)^{\frac{(n+1)}{2}}(\log C(t,\pi))^{1 - 2\sigma}\Big) + \frac{d\,(2\sigma -1)}{\sigma (1- \sigma)}\right)\hspace{0.2cm} \mbox{and}\hspace{0.2cm} \mu^{\pm}_{n,d}(\sigma)=1.
	\end{equation*}
	\item For  $n=-1$,
	\begin{equation*}
	M_{-1,\sigma}^{\pm}(t) = \frac{1}{\pi}\left( H_{0}\Big(\pm (\log C(t,\pi))^{1 - 2\sigma}\Big) + \frac{d\,(2\sigma -1)}{\sigma (1- \sigma)}\right) \hspace{0.2cm} \mbox{and}\hspace{0.2cm} \mu^{\pm}_{-1,d}(\sigma)=(\sigma-\hh)^{\mp 1}.
	\end{equation*}
	\item For $n =0$, 
	\begin{equation*}
	M_{0,\sigma}^{\pm}(t) =  \Big(2 \big(M_{1,\sigma}^{+}(t) + M_{1,\sigma}^{-}(t)\big) \, M_{-1,\sigma}^{-}(t)\Big)^{\hh}\hspace{0.2cm} \mbox{and}\hspace{0.2cm} \mu^{\pm}_{n,d}(\sigma)=(2\sigma-1)d + 1.
	\end{equation*}
	\item For $n\geq2$ even,
\begin{equation*}
M_{n,\sigma}^{\pm}(t) = \left(\frac{2 \big(M_{n+1,\sigma}^{+}(t) + M_{n+1,\sigma}^{-}(t)\big) \, M_{n-1,\sigma}^{+}(t)\, M_{n-1,\sigma}^{-}(t)}{M_{n-1,\sigma}^{+}(t) + M_{n-1,\sigma}^{-}(t)}\right)^{\hh}\hspace{0.2cm} \mbox{and}\hspace{0.2cm} \mu^{\pm}_{n,d}(\sigma)=(2\sigma-1)d + 1.
\end{equation*}
\end{itemize}
\end{theorem}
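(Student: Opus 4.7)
The proof follows the template of Carneiro--Chandee \cite{CC} and Carneiro--Chirre--Milinovich \cite{CChiM}: apply the Guinand--Weil explicit formula to $L(s,\pi)$ with a test function whose Fourier transform is supported in a short interval $[-\Delta,\Delta]$, and then bound the resulting sum over nontrivial zeros by means of Beurling--Selberg extremal majorants and minorants of exponential type $2\pi\Delta$. Under the generalized Riemann hypothesis every nontrivial zero satisfies $\rho=\tfrac12+i\gamma$ with $\gamma\in\R$. Using the Hadamard factorization of $\Lambda(s,\pi)$ together with the functional equation \functionaleq{}, each quantity targeted in the theorem can be written as a sum over the ordinates $\gamma$ of an explicit real kernel, plus an archimedean piece that is evaluated via Stirling from the $\Gamma_{\mathbb R}$-factors and collected in terms of the analytic conductor $C(t,\pi)$. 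For $n=-1$ the kernel is the Poisson kernel $(\sigma-\tfrac12)/\bigl((\sigma-\tfrac12)^2+(t-\gamma)^2\bigr)$; for $\log|L(\sigma+it,\pi)|$ it is $\tfrac12\log\bigl(1+(\sigma-\tfrac12)^2/(t-\gamma)^2\bigr)$; and for odd $n\ge 1$ one obtains a sign-definite higher-order kernel from the iterated integration in $u$ built into the definition of $S_{n,\sigma}$.

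For each of these kernels, Beurling--Selberg one-sided extremal approximants are available explicitly. Inserting them into the Guinand--Weil formula bounds the zero-sum by the $L^1$-mass of the extremal function plus a prime-power sum of the form $\sum_{k}\Lambda_{\pi}(k)\,k^{-\sigma}\,\widehat{F}(\tfrac{\log k}{2\pi})$, which is controlled by \eqref{1L_fun_eq3} and Chebyshev-type estimates. The length $\Delta$ is then chosen, as a function of $\log C(t,\pi)$ and $\log\log C(t,\pi)$, so as to balance the two contributions; a direct computation of the $L^1$-mass of the extremal function, followed by substitution of this optimal $\Delta$, yields the main term with the explicit factor $H_{n+1}\!\bigl(\pm(-1)^{(n+1)/2}(\log C(t,\pi))^{1-2\sigma}\bigr)$, where the appearance of the series $H_k$ reflects the Taylor expansions of $\pm\log(1\mp x)$ and their iterated antiderivatives, as in \eqref{25_10_1:52am}. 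The archimedean piece of the explicit formula contributes the additional correction $d(2\sigma-1)/(\sigma(1-\sigma))$. This disposes of the logarithm, the case $n=-1$, and all odd $n\ge 1$.

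For $n=0$ and even $n\ge 2$ the natural kernel lacks the sign structure required for a sharp one-sided extremal bound, so instead I interpolate between the odd-indexed bounds already established. Using $S'_{n,\sigma}=S_{n-1,\sigma}$, for $\lambda>0$ one has
\[
S_{n,\sigma}(t,\pi) \;=\; \frac{1}{2\lambda}\int_{-\lambda}^{\lambda}S_{n,\sigma}(t+u,\pi)\,\du \;-\; \frac{1}{2\lambda}\int_{-\lambda}^{\lambda}\!\int_{0}^{u} S_{n-1,\sigma}(t+v,\pi)\,\dv\,\du.
\]
The first average is bounded by integrating the already-established bound on $S_{n+1,\sigma}$ once more, producing the factors $M_{n+1,\sigma}^{\pm}$. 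The double integral is estimated by Cauchy--Schwarz and an integration by parts that recasts the mean square of $S_{n-1,\sigma}$ in terms of the $L^{\infty}$-bound on $S_{n-2,\sigma}$ (and on $S_{-1,\sigma}$ when $n=0$) together with $S_{n,\sigma}$ itself; optimizing in $\lambda$ produces the square-root/geometric-mean form of $M_{n,\sigma}^{\pm}$ and replaces the $\mu$-prefactor in the error term by $(2\sigma-1)d+1$. The main technical obstacle will be the precise bookkeeping: tracking the sign and parity factors from $i^{n}/n!$ so that the argument of $H_{n+1}$ comes out as $\pm(-1)^{(n+1)/2}(\log C(t,\pi))^{1-2\sigma}$ in every case, and carrying the archimedean corrections uniformly in $\sigma$ and in the auxiliary parameter $\Delta$.
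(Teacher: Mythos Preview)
Your overall architecture---representation lemma writing each quantity as a sum over ordinates $\gamma$, replacement of the kernel by Beurling--Selberg extremal majorants/minorants of exponential type $2\pi\Delta$, evaluation via the Guinand--Weil explicit formula, and the choice $\pi\Delta=\log\log C(t,\pi)$---is exactly what the paper does for $\log|L|$, for $n=-1$, and for odd $n\ge 1$. One correction of bookkeeping: the term $d(2\sigma-1)/(\sigma(1-\sigma))$ does \emph{not} come from the archimedean piece. The $\Gamma_{\mathbb R}$-integral in the explicit formula contributes $\tfrac{1}{2\pi}\log C(t,\pi)\cdot\widehat h^\pm(0)$ plus lower order; the constant $d(2\sigma-1)/(\sigma(1-\sigma))$ arises instead from the asymptotic evaluation of the prime-power sum after inserting the explicit Fourier transform of the extremal function and summing $\sum_{n\le x}\Lambda(n)n^{-\sigma}(\log n)^{-k}$ (this is where the paper invokes Appendices A.1--A.3). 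Getting this attribution right matters, because it is precisely here that the factor $d$ enters and that the error terms in $(1-\sigma)^{-2}$ are generated.

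For the even cases $n=0$ and $n\ge 2$ your proposed interpolation differs from the paper's and, as written, will not reproduce the stated constants. The paper does not use a symmetric average over $[-\lambda,\lambda]$ with Cauchy--Schwarz; it uses a one-sided mean-value step. For $n=0$ one writes $S_{0,\sigma}(t,\pi)-S_{0,\sigma}(t-h,\pi)=h\,S_{-1,\sigma}(t_h^*,\pi)$, applies only the \emph{lower} bound $S_{-1,\sigma}\ge -M_{-1,\sigma}^-\ell_{0,\sigma}+O(\cdot)$, integrates in $h\in[0,\nu]$ so that $\int_0^\nu S_{0,\sigma}(t-h)\,\d h=S_{1,\sigma}(t)-S_{1,\sigma}(t-\nu)$, and then optimizes $\nu$. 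The point is that a one-sided interval lets you use only $M_{-1,\sigma}^-$; a symmetric interval (or any $L^2$ argument) would drag in $M_{-1,\sigma}^+$ as well, and you would not recover the announced $M_{0,\sigma}^\pm=\big(2(M_{1,\sigma}^+ + M_{1,\sigma}^-)M_{-1,\sigma}^-\big)^{1/2}$. A second technical wrinkle that your sketch omits: because $C(t,\pi)$ is not differentiable in $t$, the paper has to show separately, via the mean value theorem applied to auxiliary functions of $\log C(t,\pi)$, that quantities like $M_{1,\sigma}^+(t)\ell_{2,\sigma}(t)$ vary by at most $O(\mu_{d,\sigma}\,r_{3,\sigma}(t))$ on intervals of length $O(1)$; this is what forces $\mu_{n,d}^\pm(\sigma)=(2\sigma-1)d+1$ in the even-$n$ error terms.
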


\smallskip

When $\sigma \to \hh$ in the above theorem we obtain a sharpened version of the results in \cite{CCM2, CChi, CF, CS} for the case of entire $L$-functions with improved error terms (a factor $\log\log\log C(t,\pi)^{\frac{3}{d}}$ has been removed). Furthermore, for a fixed $\hh<\sigma<1$ we obtain bounds as $C(t,\pi)\to \infty$.

\begin{corollary}
Let $L(s,\pi)$ be an entire $L$-function satisfying the generalized Riemann hypothesis and let $n \geq -1$. Let $\hh < \sigma < 1$ be a fixed number. Then
\begin{equation*}
\log|L(\sigma+it,\pi)| \leq \frac{1}{2}\left(1 + o(1) + d\,\bigg(\frac{2\sigma -1}{\sigma(1-\sigma)} + o(1)\bigg) \right)  \frac{(\log C(t,\pi))^{2-2\sigma}}{\log \log C(t,\pi)},
\end{equation*}
and
\begin{equation*}
|S_{n,\sigma}(t)| \leq \frac{\omega_n}{2^{n+1}\pi}\left(1 + o(1) + d\,\bigg(\frac{2\sigma -1}{\sigma(1-\sigma)} + \mu_{d,\sigma}\,o(1)\bigg) \right)  \frac{(\log C(t,\pi))^{2-2\sigma}}{(\log \log C(t,\pi))^{n+1}}
\end{equation*}
as $C(t,\pi) \to \infty$, where $\omega_n = 1$ and $\mu_{d,\sigma}=1$ if $n$ is odd, and $\omega_n = \sqrt{2}$ and $\mu_{d,\sigma}=(2\sigma-1)d + 1$ if $n$ is even. 
\end{corollary}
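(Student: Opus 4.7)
The plan is to derive the corollary directly from Theorem~\ref{teo_prin} by specializing to the regime in which $\sigma\in(\hh,1)$ is fixed and $C(t,\pi)\to\infty$. First I would observe that the hypothesis $(1-\sigma)^2\log\log C(t,\pi)\ge c$ of the theorem is automatic in this regime, since $1-\sigma$ is a positive constant depending only on $\sigma$: once $C(t,\pi)$ is sufficiently large, the inequality holds for any fixed $c>0$, so Theorem~\ref{teo_prin} applies uniformly for all such~$t$.

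The central observation is that, with $\sigma>\hh$ fixed, the exponent $1-2\sigma$ is strictly negative, so $x:=(\log C(t,\pi))^{1-2\sigma}\to 0^{+}$ as $C(t,\pi)\to\infty$. From the defining series $H_n(x)=\sum_{k\ge 0}x^{k}/(k+1)^{n}$ we have $H_n(x)=1+O(|x|)$ uniformly for $|x|\le\hh$, and hence $H_n(\pm x)=1+o(1)$. Substituting this into the closed-form expressions for $M^{\pm}_{\sigma}(t)$, $M^{\pm}_{-1,\sigma}(t)$, and $M^{\pm}_{n,\sigma}(t)$ with $n$ odd yields
\begin{equation*}
M^{\pm}_{\sigma}(t) = \tfrac{1}{2}\Bigl(1+\tfrac{d(2\sigma-1)}{\sigma(1-\sigma)}\Bigr)+o(1), \qquad M^{\pm}_{n,\sigma}(t) = \tfrac{1}{2^{n+1}\pi}\Bigl(1+\tfrac{d(2\sigma-1)}{\sigma(1-\sigma)}\Bigr)+o(1),
\end{equation*}
with an analogous expression for $n=-1$. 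Combined with the error terms in Theorem~\ref{teo_prin}, which are $o(1)$ times the leading scale $(\log C(t,\pi))^{2-2\sigma}/(\log\log C(t,\pi))^{n+1}$ thanks to the extra factor $1/\log\log C(t,\pi)$, this immediately produces the bounds claimed in the corollary whenever $n$ is $-1$ or odd.

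For the even-$n$ cases ($n=0$ and $n\ge 2$), I would substitute the asymptotic values just established for $M^{\pm}_{n\pm 1,\sigma}(t)$ into the recursive formulas defining $M^{\pm}_{n,\sigma}(t)$. The common factor $(1+d(2\sigma-1)/(\sigma(1-\sigma)))$ pulls out of the square root, and a short algebraic simplification yields a constant of the shape $\omega_{n}/(2^{n+1}\pi)$ with $\omega_{n}=\sqrt{2}$; the $\sqrt{2}$ originates from taking a geometric-mean-type expression of quantities from the two neighbouring odd levels whose leading constants differ by factors of $2$. The two-sided bound of Theorem~\ref{teo_prin} then gives $|S_{n,\sigma}(t)|\le\max\{M^{+}_{n,\sigma}(t),M^{-}_{n,\sigma}(t)\}$, which is the single-sided inequality in the corollary.

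The only nonroutine point, which I expect to be the main (if minor) obstacle, is propagating the error terms through the square root in the even-$n$ recursion: one expands $(A+\varepsilon)^{1/2}=A^{1/2}(1+\varepsilon/(2A)+\cdots)$ around the leading asymptotic value $A$ and verifies that the resulting relative error remains $o(1)$, so that it is correctly absorbed into the $o(1)$ and $d\,\mu_{d,\sigma}\,o(1)$ contributions, producing exactly the form stated in the corollary.
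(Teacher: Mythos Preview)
Your proposal is correct and is exactly the approach the paper takes: the corollary is stated immediately after Theorem~\ref{teo_prin} without a separate proof, as a direct specialization to fixed $\sigma$ with $C(t,\pi)\to\infty$, and your outline (noting $(\log C(t,\pi))^{1-2\sigma}\to 0$, hence $H_n(\pm x)=1+o(1)$, and feeding the resulting asymptotics through the recursive definitions of $M^{\pm}_{n,\sigma}$) is precisely this specialization.
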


\bigskip

\section{Preliminaries}

The proof of Theorem \ref{teo_prin} follows the same circle of ideas used to prove estimates for the Riemann zeta-function in \cite{CChiM, CS}. First, we show the results for $\log|L(s,\pi)|$ and $S_{n}(t,\pi)$, when $n\geq -1$ odd. In these cases, we need three ingredients: a suitable representation lemma for our objects, the Guinand-Weil explicit formula connecting primes and zeros and some extremal bandlimited approximations. 

\subsection{Representation Lemma} The idea of the representation lemma is to have formulas of the objects to be bounded, assuming the generalized Riemann hypothesis. Let $m\geq 0$ be an integer and $\hh<\sigma\leq1$ be a real number. Consider the functions $f_{\sigma},f_{2m+1,\sigma},f_{1,\sigma}:\R \to \R$ defined by

\begin{equation*}\label{Def_f}
f_{\sigma}(x)=\log\Bigg(\dfrac{1+x^2}{\big(\sigma-\hh\big)^2+x^2}\Bigg),
\end{equation*} 
\begin{equation*}\label{Def_f_2m+1}
f_{2m+1,\sigma}(x)=\frac{1}{2}\int_{\sigma}^{\tfrac{3}{2}}{(u-\sigma)^{2m}\,\log\left(\dfrac{1+x^2}{(u-\hh)^2+x^2}\right)} \,\du,
\end{equation*} 
and  
\begin{equation*}\label{Def_f_-1}
f_{-1,\sigma}(x)= \frac{(\sigma - \hh)}{(\sigma - \hh)^2 + x^2}.
\end{equation*}

\medskip

Lemma \ref{lem_rep} has appeared in \cite[Lemma 7]{CChiM} in the case $\sigma=\hh$ for the Riemann zeta-function. The proof for entire $L$-functions follows the same outline (see \cite[Lemma 4]{CF}).

\begin{lemma} \label{lem_rep} Let $L(s,\pi)$ be an entire $L$-function satisfying the generalized Riemann hypothesis and $m\geq0$ be an integer.  Then, for $\hh<\sigma\leq 1$ and $t\in\mathbb{R}$ we have
	\begin{equation} \label{Rep_lem_f}
	\log|L(\sigma+it,\pi)|= \bigg(\dfrac{3}{4}-\dfrac{\sigma}{2}\bigg)\log C(t,\pi)-\dfrac{1}{2}\displaystyle\sum_{\gamma}f_{\sigma}(t-\gamma)  
	+ O(d), 
	\end{equation}
	\begin{equation} \label{Rep_lem_f_2}
	S_{2m+1,\sigma}(t,\pi)=\dfrac{(-1)^m}{2\pi(2m+2)!}\,\bigg(\dfrac{3}{2}-\sigma\bigg)^{2m+2}\log C(t,\pi) -\dfrac{(-1)^m}{\pi(2m)!}\,\displaystyle\sum_{\gamma}f_{2m+1,\sigma}(t-\gamma)+ O_m(d),
	\end{equation}
	\begin{equation} \label{Rep_lem_f_{-1}}
	S_{-1,\sigma}(t,\pi)=-\dfrac{\log C(t,\pi)}{2\pi}+\dfrac{1}{\pi}\displaystyle\sum_{\gamma}f_{-1,\sigma}(t-\gamma)+ O(d),
	\end{equation}
where the sums run over all values of $\gamma$ such that $L\big(\hh+i\gamma,\pi\big)=0$, counted with multiplicity.
\end{lemma}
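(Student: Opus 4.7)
The plan is to derive all three representation formulas from one master identity: the real-part partial-fraction expansion
\begin{equation*}
\re\dfrac{\Lambda'}{\Lambda}(\sigma+it,\pi) \;=\; \sum_\gamma \dfrac{\sigma-\hh}{(\sigma-\hh)^2+(t-\gamma)^2},
\end{equation*}
valid for $\sigma>\hh$ under GRH. This comes from combining the Hadamard factorization of $\Lambda(s,\pi)$ (entire of order one with no zeros at $0,1$) with the Schwarz-reflection plus functional-equation symmetry $\overline{\tfrac{\Lambda'}{\Lambda}(s,\pi)}=-\tfrac{\Lambda'}{\Lambda}(1-\bar s,\pi)$. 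In the difference $\tfrac{\Lambda'}{\Lambda}(\sigma+it,\pi)-\tfrac{\Lambda'}{\Lambda}(1-\sigma+it,\pi)$ the Hadamard constant $B$ and the conditionally convergent $1/\rho$ terms cancel, and under GRH the remaining $1/(s-\rho)$ sum simplifies to twice the displayed expression. Using then the decomposition $\tfrac{\Lambda'}{\Lambda}=\hh\log N+\sum_{j=1}^{d}\tfrac{\Gamma_\R'}{\Gamma_\R}(\cdot+\mu_j)+\tfrac{L'}{L}$ together with Stirling's estimate $\re\tfrac{\Gamma_\R'}{\Gamma_\R}(u+it+\mu_j)=\hh\log(|it+\mu_j|+3)+O(1)$, uniform in $u$ on compact subsets of $(\hh,\infty)$, one obtains the working formula
\begin{equation*}
\re\dfrac{L'}{L}(u+it,\pi) \;=\; -\hh\log C(t,\pi) \;+\; \sum_\gamma \dfrac{u-\hh}{(u-\hh)^2+(t-\gamma)^2} \;+\; O(d).
\end{equation*}

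Equation \eqref{Rep_lem_f_{-1}} now follows by setting $u=\sigma$ and dividing by $\pi$. For \eqref{Rep_lem_f}, I start from
\begin{equation*}
\log|L(\sigma+it,\pi)| \;=\; \log|L(\hn+it,\pi)| \;-\; \int_\sigma^{3/2}\re\dfrac{L'}{L}(u+it,\pi)\,\du,
\end{equation*}
noting that $\log|L(\hn+it,\pi)|=O(d)$ by \eqref{L_fun_eq1}. Substituting the working formula, swapping sum and integral (justified since $\sigma>\hh$), and evaluating $\int_\sigma^{3/2}\tfrac{u-\hh}{(u-\hh)^2+x^2}\,\du=\hh f_\sigma(x)$ produces the main term $\bigl(\tfrac{3}{4}-\tfrac{\sigma}{2}\bigr)\log C(t,\pi)$ from the gamma/$\log N$ contribution and $-\hh\sum_\gamma f_\sigma(t-\gamma)$ from the zero sum.

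For \eqref{Rep_lem_f_2} I split the defining integral as $\int_\sigma^\infty=\int_\sigma^{3/2}+\int_{3/2}^\infty$. On $[\hn,\infty)$ the absolutely convergent Dirichlet series \eqref{L_fun_eq2} together with the bound \eqref{1L_fun_eq3} gives, after a direct integration against the polynomial weight $(u-\sigma)^{2m+1}$ (controlled by the exponential decay of $n^{-u}$), a contribution of size $O_m(d)$. On $[\sigma,\hn]$ I substitute the working formula and integrate by parts, using the identity
\begin{equation*}
\int_\sigma^{3/2}(u-\sigma)^{2m+1}\,\dfrac{u-\hh}{(u-\hh)^2+x^2}\,\du \;=\; (2m+1)\,f_{2m+1,\sigma}(x),
\end{equation*}
which is obtained by matching the integration-by-parts output with the definition of $f_{2m+1,\sigma}$. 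The gamma/$\log N$ portion of this segment contributes $\tfrac{(3/2-\sigma)^{2m+2}}{2(2m+2)}\log C(t,\pi)+O_m(d)$, and multiplying by the prefactor $\frac{(-1)^{m}}{\pi(2m+1)!}$ inherent in the definition of $S_{2m+1,\sigma}$ reproduces \eqref{Rep_lem_f_2}.

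The main technical obstacle is the derivation of the master identity for $\re\tfrac{\Lambda'}{\Lambda}$: neither $B$ nor the sum $\sum_\rho 1/\rho$ is individually amenable to direct estimation, and it is precisely the pairing enforced by the functional equation together with GRH that produces a clean, absolutely convergent expression. Once this identity is in hand, the remaining work is essentially integration-by-parts bookkeeping and Stirling asymptotics, paralleling \cite[Lemma 7]{CChiM} and \cite[Lemma 4]{CF}.
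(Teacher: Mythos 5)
Your proposal is correct, and the constants check out: the master identity $\re\frac{\Lambda'}{\Lambda}(\sigma+it,\pi)=\sum_\gamma\frac{\sigma-\hh}{(\sigma-\hh)^2+(t-\gamma)^2}$ follows exactly as you say (Hadamard at $\sigma+it$ and $1-\sigma+it$, with the functional-equation symmetry turning the difference into $2\re\frac{\Lambda'}{\Lambda}$ and the paired $\frac1{s-\rho}$ terms collapsing under GRH), the evaluation $\int_\sigma^{3/2}\frac{u-\hh}{(u-\hh)^2+x^2}\,\du=\hh f_\sigma(x)$ and your integration-by-parts identity $\int_\sigma^{3/2}(u-\sigma)^{2m+1}\frac{u-\hh}{(u-\hh)^2+x^2}\,\du=(2m+1)f_{2m+1,\sigma}(x)$ are both right, and the prefactors reproduce \eqref{Rep_lem_f}, \eqref{Rep_lem_f_2} and \eqref{Rep_lem_f_{-1}}. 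The organization differs from the paper's: you prove the $S_{-1,\sigma}$-type statement first and then obtain \eqref{Rep_lem_f} and \eqref{Rep_lem_f_2} by integrating the working formula in $u$ (justifying the interchange of the zero-sum and the integral by positivity), whereas the paper gets \eqref{Rep_lem_f} directly at the level of logarithms, from the quotient $\Lambda(\sigma+it,\pi)/\Lambda(\hn+it,\pi)$ of Hadamard products plus Stirling, then deduces \eqref{Rep_lem_f_2} from \eqref{Rep_lem_f} by integration by parts in the definition of $S_{2m+1,\sigma}$, and proves \eqref{Rep_lem_f_{-1}} separately from the partial-fraction expansion of $L'/L$ in \cite[Theorem 5.6]{IK} with $\re B=-\re\sum_\rho\rho^{-1}$. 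The ingredients are the same; your version buys a single unifying identity (and makes the cancellation of $B$ and $\sum\rho^{-1}$ self-contained rather than cited), while the paper's version avoids any sum--integral interchange and gets the $\log|L|$ formula in one step.

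One small caveat: the implied constant in the $O(d)$ of your working formula must be uniform in $\sigma$ down to $\hh$ (the lemma is applied with $\sigma\to\hh$ rates later), so you should invoke Stirling uniformly for $\hh\leq u\leq\hn$, as in \eqref{New_Stirling}--\eqref{15_6_11:40pm}, rather than merely ``on compact subsets of $(\hh,\infty)$''; since $\re(u+\mu_j)\geq\hh$ there, this is a wording fix, not a gap.
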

\begin{proof} First, we prove \eqref{Rep_lem_f}. For $\hh\leq\sigma\leq\tfrac{3}{2}$ we have that
	\begin{align} \label{lem_G_1}
	\log\bigg|\dfrac{L(\sigma+it,\pi)}{L(\hn+it,\pi)}\bigg| = \log\bigg|\dfrac{\Lambda(\sigma+it,\pi)}{\Lambda(\hn+it,\pi)}\bigg| + \log\bigg|\dfrac{N^{(3/2+it)/2}}{N^{(\sigma+it)/2}}\bigg|+ \displaystyle\sum_{j=1}^{d}\log\bigg|\dfrac{\Gamma_{\mathbb{R}}\big(\hn+it+\mu_j\big)}{\Gamma_{\mathbb{R}}(\sigma+it+\mu_j)}\bigg|.
	\end{align}
We treat each term on the right-hand side of \eqref{lem_G_1}. From Hadamard's factorization formula \cite[Theorem 5.6 and Eq. (5.29)]{IK}, the analyticity of $L(s,\pi)$ and the generalized Riemann hypothesis, it follows that 
	\begin{align} \label{lem_G_3}
	\log\bigg|\dfrac{\Lambda(\sigma+it,\pi)}{\Lambda \big(\hn+it,\pi\big)}\bigg|&=-\dfrac{1}{2}\displaystyle\sum_{\gamma}\log\Bigg(\dfrac{1+(t-\gamma)^2}{\big(\sigma-\hh\big)^2+(t-\gamma)^2}\Bigg),
	\end{align} 
	where the sums run over all values of $\gamma$ such that $\Lambda\big(\hh+i\gamma,\pi\big)=0$, counted with multiplicity. A simple computation of the second term show that
	\begin{align} \label{lem_G_4}
	\log\bigg|\dfrac{N^{(3/2+it)/2}}{N^{(\sigma+it)/2}}\bigg|=\bigg(\dfrac{3}{4}-\dfrac{\sigma}{2}\bigg)\log N. 
	\end{align}
	To analyze the third term, we shall use the Stirling's formula in the form
    \begin{align} \label{New_Stirling}
	\dfrac{\Gamma_{\R}'}{\Gamma_{\R}}(s)=\dfrac{1}{2}\log s+O(1),
	\end{align}
	which is valid for $\re{s}\geq\hh$. Since $\re{\mu_j}\geq 0$, we have 
	\begin{equation} \label{15_6_11:40pm}
	\re\dfrac{\Gamma'_{\R}}{\Gamma_{\R}}(u+\mu_j+it) = \dfrac{1}{2}\log(|\mu_j+it|+3) + O(1)
	\end{equation}
	uniformly in $\hh\leq u \leq \frac{3}{2}$, so that	
	\begin{align} 
	\begin{split}  \label{lem_G_5}
	\log\bigg|\dfrac{\Gamma_{\mathbb{R}}\big(\hn+it+\mu_j\big)}{\Gamma_{\mathbb{R}}(\sigma+it+\mu_j)}\bigg|& = \re\int_{\sigma}^{\hn}(\log\Gamma_{\R}(u+\mu_j+it))'\,\du=\int_{\sigma}^{\hn}\re\dfrac{\Gamma'_{\R}}{\Gamma_{\R}}(u+\mu_j+it)\,\du  \\
	&=\bigg(\dfrac{3}{4}-\dfrac{\sigma}{2}\bigg)\log(|\mu_j+it|+3)+O(1).
	\end{split}
	\end{align}
For the left-hand side of \eqref{lem_G_1}, using \eqref{L_fun_eq1} we get
\begin{align} \label{3_10_7:06pm}
\log|L(\hn+it,\pi)|=O(d).
\end{align}
Finally, using \eqref{lem_G_3}, \eqref{lem_G_4}, \eqref{lem_G_5} and \eqref{3_10_7:06pm} in \eqref{lem_G_1} we obtain for $\hh\leq\sigma\leq\tfrac{3}{2}$ and $t\in\R$ that
 	\begin{align} \label{28_9_2:7am}
	\log|L(\sigma+it,\pi)|= \bigg(\dfrac{3}{4}-\dfrac{\sigma}{2}\bigg)\log C(t,\pi)-\dfrac{1}{2}\displaystyle\sum_{\gamma}\log\Bigg(\dfrac{1+(t-\gamma)^2}{\big(\sigma-\hh\big)^2+(t-\gamma)^2}\Bigg) + O(d).
	\end{align}
	 This yields the desired result. In order to prove \eqref{Rep_lem_f_2}, we use integration by parts and \eqref{L_fun_eq1} to get
	\begin{align} \label{29_10_4:24am}
	S_{2m+1,\sigma}(t,\pi) & = \dfrac{(-1)^{m}}{\pi(2m)!}\,\Bigg\{\int_{\sigma}^{\hn}(u-\sigma)^{2m}\,\log |L(u+it,\pi)|\du\Bigg\} + O_m(d).
    \end{align}
	Then, inserting \eqref{28_9_2:7am} in \eqref{29_10_4:24am} and straightforward computations will imply \eqref{Rep_lem_f_2}. 
	Finally, we prove \eqref{Rep_lem_f_{-1}}. By the partial fraction descomposition of the logarithmic derivative of $L(s,\pi)$ in \cite[Theorem 5.6]{IK}, we have
	\begin{equation*}
	\dfrac{L'}{L}(\sigma+it,\pi)=\displaystyle\sum_{\rho}\bigg(\dfrac{1}{\sigma+it-\rho}+\dfrac{1}{\rho}\bigg)+B-\dfrac{\log N}{2}-\displaystyle\sum_{j=1}^d\dfrac{\Gamma'_\R}{\Gamma_\R}(\sigma+it+\mu_j),
	\end{equation*}
	where $\re{B}=-\re{\sum_{\rho}\rho^{-1}}$. Then, taking the real part of this equation, considering that $\rho=\frac{1}{2}+i\gamma$ and using \eqref{15_6_11:40pm} we obtain \eqref{Rep_lem_f_{-1}} as required. 	
\end{proof}

\subsection{Guinand-Weil explicit formula}
Note that in the above representations each object is written as a sum of a translate of some function of a real variable over the non-trivial zeros of $L(s,\pi)$ plus some known terms and a small error. 
A useful tool one can use to evaluate sums over the non-trivial zeros of $L(s,\pi)$ is the Guinand-Weil explicit formula. In our setting of entire $L$-functions we shall use the following version (the proof of the general version can be found in \cite[Lemma 5]{CF}).
\begin{lemma} \label{Exp_for_L} Let $L(s,\pi)$ be an entire $L$-function. Let $h(s)$ be analytic in the strip $|\im{s}|<\tfrac12+\varepsilon$ for some $\varepsilon>0$, and assume that $|h(s)|\ll(1+|s|)^{-(1+\delta)}$ for some $\delta>0$ when $|\re{s}|\to\infty$. Then 
	\begin{align*}
	\begin{split}
	\sum_{\rho} h\left(\dfrac{\rho-\hh}{i}\right)&= \frac{\log N}{2\pi}\widehat{h}(0) + \frac{1}{\pi}\sum_{j=1}^d\int_{-\infty}^\infty h(u)\,{\rm Re}\,\frac{\Gamma_\mathbb R'}{\Gamma_\mathbb R}\left(\hh+\mu_j+iu\right)\d u\\
	& \ \ \ -\frac{1}{2\pi}\sum_{n=2}^\infty\frac{1}{\sqrt{n}}\left\{\Lambda_{\pi}(n)\, \widehat h\left(\frac{\log n}{2\pi}\right)+\overline{\Lambda_{\pi}(n)}\, \widehat h\left(\frac{-\log n}{2\pi}\right)\right\},
	\end{split}
	\end{align*}
	where the sum runs over all zeros $\rho$ of $\Lambda(s, \pi)$ and the coefficients $\Lambda_\pi(n)$ are defined by \eqref{L_fun_eq2}.
\end{lemma}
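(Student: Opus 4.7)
The plan is a classical residue-theorem argument. I would apply the residue theorem to
\[
\frac{1}{2\pi i}\oint_{\partial R_T} h\!\left(\frac{s-\hh}{i}\right)\frac{\Lambda'}{\Lambda}(s,\pi)\,\d s,
\]
where $R_T$ is the rectangle $\{-\varepsilon'\le \re s\le 1+\varepsilon',\ |\im s|\le T\}$ with $0<\varepsilon'<\varepsilon$. Since $\Lambda(s,\pi)$ is entire of order $1$, the only poles of $\Lambda'/\Lambda$ are the zeros $\rho$, which lie in the critical strip, and each contributes a residue equal to $h((\rho-\hh)/i)$ counted with multiplicity. Letting $T\to\infty$ through a sequence $T_k$ avoiding zero ordinates, the contour integral converges to $\sum_\rho h((\rho-\hh)/i)$. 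To discard the horizontal sides I would combine $|h(s)|\ll(1+|s|)^{-1-\delta}$ with the standard Hadamard-factorization bound $|\Lambda'/\Lambda(s,\pi)|\ll\log^{2}(|T_k|+2)$ available for appropriately chosen $T_k$, where the archimedean factors are controlled by Stirling in the form \eqref{New_Stirling}; the resulting horizontal contributions vanish in the limit.

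Next I would evaluate the two vertical pieces. On $\re s=1+\varepsilon'$, where the Dirichlet series converges absolutely, insert
\[
\frac{\Lambda'}{\Lambda}(s,\pi)=\frac{\log N}{2}+\sum_{j=1}^{d}\frac{\Gamma_{\R}'}{\Gamma_{\R}}(s+\mu_j)-\sum_{n\ge 2}\frac{\Lambda_\pi(n)}{n^{s}},
\]
parametrize $s=\hh+iu$ so that $(s-\hh)/i=u$, and shift the resulting $u$-contour from $\{\im u=-(\hh+\varepsilon')\}$ down to $\R$, which is legal by analyticity of $h$ in the strip $|\im u|<\hh+\varepsilon$. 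The three pieces become $\frac{\log N}{4\pi}\widehat h(0)$, a prime sum $-\frac{1}{4\pi}\sum_n \frac{\Lambda_\pi(n)}{\sqrt n}\widehat h(\frac{\log n}{2\pi})$, and the Gamma integral $\frac{1}{2\pi}\sum_j\int_{\R}h(u)\frac{\Gamma_{\R}'}{\Gamma_{\R}}(\hh+\mu_j+iu)\,\du$. For the left side $\re s=-\varepsilon'$, I would apply the functional equation in the form $\Lambda'/\Lambda(s,\pi)=-\Lambda'/\Lambda(1-s,\tilde\pi)$, substitute $s\mapsto 1-s$, and repeat the previous analysis for $\tilde\pi$ with $h(u)$ replaced by $h(-u)$; using $\Lambda_{\tilde\pi}(n)=\overline{\Lambda_\pi(n)}$ together with the hypothesis $\{\overline{\mu_j}\}=\{\mu_j\}$, this contributes a second $\frac{\log N}{4\pi}\widehat h(0)$, the conjugate prime sum $-\frac{1}{4\pi}\sum_n \frac{\overline{\Lambda_\pi(n)}}{\sqrt n}\widehat h(-\frac{\log n}{2\pi})$, and the conjugate Gamma integrand.

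The main obstacle is the symmetrization step: one must verify that adding the right-line Gamma integral $\int h(u)\frac{\Gamma_{\R}'}{\Gamma_{\R}}(\hh+\mu_j+iu)\,\du$ to its reflected left-side counterpart, after a change of variable $u\mapsto -u$ and the identity $\frac{\Gamma_{\R}'}{\Gamma_{\R}}(\bar z)=\overline{\frac{\Gamma_{\R}'}{\Gamma_{\R}}(z)}$, produces exactly $2\int h(u)\,\re\frac{\Gamma_{\R}'}{\Gamma_{\R}}(\hh+\mu_j+iu)\,\du$, which together with the factor $\frac{1}{2\pi}$ from each side yields the $\frac{1}{\pi}$ appearing in the statement; this requires careful bookkeeping of the reindexing $\mu_j\leftrightarrow\overline{\mu_j}$ permitted by (iii). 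The $\log N$ and prime sums then assemble routinely into the stated form. A secondary technical issue is the selection of $T_k\to\infty$ giving uniform control of $\Lambda'/\Lambda$ on horizontal sides, handled by the standard partial-fraction estimates for logarithmic derivatives of entire functions of order $1$.
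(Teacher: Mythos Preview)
The paper does not give its own proof of this lemma; it simply states that ``the proof of the general version can be found in \cite[Lemma 5]{CF}.'' Your outline is the standard residue-theorem derivation of the Guinand--Weil formula for this class of $L$-functions and is correct in all essential points, matching the cited argument (minor slip: the shift of the $u$-contour from $\{\im u=-(\hh+\varepsilon')\}$ to $\R$ is \emph{upward}, not downward).
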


\subsection{Extremal functions}

Observe that the functions $f_{\sigma}, f_{2m+1,\sigma}$ and $f_{-1,\sigma}$ do not verify the required smoothness properties to apply the Guinand-Weil formula. Then, we replace each of these functions by appropriate extremal majorants and minorants of exponential type (thus with a compactly supported Fourier transform by the Paley-Wiener theorem), that minimize the $L^{1}(\R)$-distance. These extremal functions may be found by means of the Gaussian subordination framework of Carneiro, Littmann and Vaaler \cite{CLV}. The following lemma shows some properties of the extremal functions for $f_{\sigma}$. The proof of this result follows from \cite[Lemma 3.2]{C} (see also \cite[Lemma 5-8]{CC}).

\begin{lemma}\label{Ext_func_f} Let $\hh<\sigma< 1$ and $\Delta\geq 0.02$ be real numbers and let $\Omega(\sigma)=|\log(\sigma-\hh)|$. Then there is a pair of real entire functions $g^{\pm}_{\sigma,\Delta}:\mathbb{C}\to\mathbb{C}$ satisfying the following properties:
	\begin{itemize}
		\item[(i)] For $x\in\mathbb{R}$ we have
		\begin{align}\label{Ext_func_f_1}
		-\dfrac{1}{1+x^2} \ll g^{-}_{\sigma,\Delta}(x) \leq f_{\sigma}(x) \leq  g^{+}_{\sigma,\Delta}(x) \ll \dfrac{\Omega(\sigma)}{(\sigma-\hh)^2+x^2}. 
		\end{align}
		Moreover, for any complex number $z=x+iy$ we have
		\begin{align}\label{Ext_func_f_21}
		\big|g^{-}_{\sigma,\Delta}(z)\big|\ll \dfrac{\Delta^{2}e^{2\pi\Delta|y|}}{(1+\Delta|z|)},  
		\end{align}
		and
		\begin{align}\label{Ext_func_f_22}
		\big|g^{+}_{\sigma,\Delta}(z)\big|\ll \dfrac{\Omega(\sigma)\Delta^{2}e^{2\pi\Delta|y|}}{(1+\Delta|z|)}.  
		\end{align}
		
		\smallskip
		
		\item[(ii)] The Fourier transforms of $m^{\pm}_{\Delta}$ are even continuous functions supported on the interval $[-\Delta,\Delta]$. For $0<\xi<\Delta$ these are given by
		\begin{align} \label{27_9_1:55am}
		\begin{split}  \widehat{g}^{\pm}_{\sigma,\Delta}(\xi) = \sum_{k=-\infty}^{\infty}(\pm 1)^k \frac{(k+1)}{|\xi+k\Delta|}\Big(e^{-2\pi |\xi+k\Delta|(\sigma-\frac12)}- e^{-2\pi|\xi+k\Delta|}\Big).
		\end{split}
		\end{align}
		\smallskip
		\item [(iii)] At $\xi=0$ we have
		\begin{align} \label{Ext_func_G_41}
		\widehat{g}^{\pm}_{\sigma,\Delta}(0)=2\pi\bigg(\dfrac{3}{2}-\sigma\bigg)-\dfrac{2}{\Delta}\log\bigg(\dfrac{1\mp e^{-(2\sigma-1)\pi\Delta}}{1\mp e^{-2\pi\Delta}}\bigg).
		\end{align}
	\end{itemize}
\end{lemma}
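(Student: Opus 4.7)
My plan is to construct $g^{\pm}_{\sigma,\Delta}$ as integrals of the Carneiro--Littmann--Vaaler \cite{CLV} Gaussian extremal functions against an explicit weight, following the template of \cite[Lemma 3.2]{C} (cf.\ \cite[Lemmas 5--8]{CC}). The starting point is a Gaussian subordination representation of $f_\sigma$ obtained from Frullani's identity $\log(a/b)=\int_0^\infty(e^{-bs}-e^{-as})\,s^{-1}\,\d s$ with $a=1+x^2$ and $b=(\sigma-\hh)^2+x^2$:
\[
f_{\sigma}(x) \;=\; \int_0^{\infty}\bigl(e^{-(\sigma-\hh)^2 t}-e^{-t}\bigr)\,e^{-x^2 t}\,\frac{\d t}{t}.
\]
Since $\sigma>\hh$, the weight $w(t):=(e^{-(\sigma-\hh)^2 t}-e^{-t})/t$ is pointwise nonnegative on $(0,\infty)$ with total mass $\int_0^\infty w(t)\,\d t = 2\,\Omega(\sigma)$, so $f_\sigma$ is a genuine positive superposition of Gaussians. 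Next, for each $t>0$ I invoke \cite{CLV} to obtain real entire extremal functions $L_\Delta^{\pm}(\cdot,t)$ of exponential type $2\pi\Delta$ satisfying $L_\Delta^{-}(x,t)\le e^{-x^2 t}\le L_\Delta^{+}(x,t)$ on $\R$ and minimizing the $L^1(\R)$-distance to $e^{-x^2 t}$. Setting
\[
g^{\pm}_{\sigma,\Delta}(x) \;:=\; \int_0^{\infty}w(t)\,L_\Delta^{\pm}(x,t)\,\d t
\]
produces real entire majorants/minorants of $f_\sigma$ of exponential type at most $2\pi\Delta$, so by the Paley--Wiener theorem their Fourier transforms are supported in $[-\Delta,\Delta]$, which gives the support assertion in (ii).

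The real-line bounds in \eqref{Ext_func_f_1} would follow from the sharp pointwise CLV estimates on $L_\Delta^{\pm}(x,t)-e^{-x^2 t}$, integrated against $w$ after splitting the $t$-range into small-$t$ and large-$t$ regimes: the $\Omega(\sigma)$-factor in the upper bound on $g^+_{\sigma,\Delta}$ and the shape $((\sigma-\hh)^2+x^2)^{-1}$ track the total mass of $w$ and the decay of $f_\sigma$ itself, while the bound $-(1+x^2)^{-1}$ on $g^-_{\sigma,\Delta}$ reflects the $L^1$-error of the CLV minorant for the Gaussian. The complex-plane bounds \eqref{Ext_func_f_21}--\eqref{Ext_func_f_22} follow in the same way from the exponential-type estimates on $L_\Delta^{\pm}(z,t)$ in \cite{CLV} integrated against $w$.

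For the Fourier-side formulas, Fubini gives $\widehat{g}^{\pm}_{\sigma,\Delta}(\xi)=\int_0^\infty w(t)\,\widehat{L}^{\pm}_\Delta(\xi,t)\,\d t$, and \cite{CLV} expresses $\widehat{L}^{\pm}_\Delta(\xi,t)$ as a $\Delta\Z$-periodization of $\sqrt{\pi/t}\,e^{-\pi^2(\xi+k\Delta)^2/t}$ with coefficients $(\pm 1)^k(k+1)$. The inner $t$-integral evaluates in closed form via
\[
\int_0^\infty t^{-3/2}e^{-at-b/t}\,\d t \;=\; \sqrt{\pi/b}\,e^{-2\sqrt{ab}},
\]
yielding the exponentials $e^{-2\pi(\sigma-\hh)|\xi+k\Delta|}-e^{-2\pi|\xi+k\Delta|}$ and the $1/|\xi+k\Delta|$ factor in \eqref{27_9_1:55am}. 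The closed form \eqref{Ext_func_G_41} is obtained by sending $\xi\to 0$: the $k\neq 0$ contributions sum via a geometric-type series to $-\tfrac{2}{\Delta}\log\bigl((1\mp e^{-(2\sigma-1)\pi\Delta})/(1\mp e^{-2\pi\Delta})\bigr)$, while the $k=0$ limit combines with the total mass of $w$ to produce the leading $2\pi(\tfrac32-\sigma)$.

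I expect the main obstacle to be the precise bookkeeping in verifying the sharp bound on $g^+_{\sigma,\Delta}$ with exactly the stated $\Omega(\sigma)$-dependence, which requires an explicit partition of the $t$-range in the convolution against $w$ together with the sharp CLV pointwise majorant estimates in each regime. This delicate calculation is essentially carried out in \cite[Lemma 3.2]{C}, and my task reduces to adapting that argument with the scale $\sigma-\hh$ playing the role of the relevant parameter and tracking the sign conventions so that the $(\pm 1)^k(k+1)$ pattern in \eqref{27_9_1:55am} comes out correctly.
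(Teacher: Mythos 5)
Your proposal is correct and follows essentially the same route as the paper, which proves this lemma by appeal to the Gaussian subordination framework of \cite{CLV} as worked out in \cite[Lemma 3.2]{C} and \cite[Lemmas 5--8]{CC}: Frullani gives $f_\sigma$ as a nonnegative superposition of Gaussians, the CLV majorants/minorants are integrated against that weight, and the Fourier-side formulas (including the $(\pm1)^k(k+1)$ periodization and the $\xi\to0$ evaluation via the logarithmic series) come out exactly as you indicate. The delicate pointwise bounds in (i), uniform in $\Delta\ge 0.02$, are deferred to the cited extremal-function estimates, which is also what the paper does.
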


\bigskip

Analogously, the  next lemma gives some properties of the extremal functions for $f_{2m+1,\sigma}$. The proof of this result follows from \cite[Lemma 10]{CChiM}.

\begin{lemma}\label{Rep_lem_f_{-1}_2} Let $m\geq 0$ be an integer and let $\hh<\sigma< 1$ and $\Delta\geq 0.02$ be real numbers. Then there is a pair of real entire functions $g_{2m+1,\sigma,\Delta}^{\pm}:\mathbb{C}\to\mathbb{C}$ satisfying the following properties:
	\begin{itemize}
		\item[(i)] For $x\in\R$ we have
		\begin{align}\label{Ext_func_f_1_2_2}
		-\dfrac{1}{1+x^2}\ll_m g_{2m+1,\sigma,\Delta}^{-}(x) \leq f_{2m+1,\sigma}(x) \leq g_{2m+1,\sigma,\Delta}^{+}(x) \ll_m \dfrac{1}{1+x^2}.
		\end{align}
		Moreover, for any complex number $z=x+iy$ we have
		\begin{align}\label{bound_g_2m+1_complex}
		\big|g_{2m+1,\sigma,\Delta}^{\pm}(z)\big|\ll_m\dfrac{\Delta^{2}e^{2\pi\Delta|y|}}{(1+\Delta|z|)}.
		\end{align}
		
		\smallskip
		
		\item[(ii)] The Fourier transforms of $g^{\pm}_{2m+1,\sigma,\Delta}$ are even continuous functions supported on the interval $[-\Delta,\Delta]$. For $0<\xi<\Delta$ these are given by
		\begin{align}\label{27_9_1:55am_2_2_2}
		\begin{split}
				&\widehat{g}^{\pm}_{2m+1,\sigma,\Delta}(\xi)  =\\
				&   \ \ \ \dfrac{1}{2}\sum_{k=-\infty}^{\infty}(\pm 1)^k \left[\frac{k+1}{|\xi+k\Delta|}\Bigg(\frac{(2m)!\,e^{-2\pi |\xi+k\Delta|(\sigma-\frac12)}}{(2\pi|\xi+k\Delta|)^{2m+1}}- \!\!\sum_{j=0}^{2m+1}\frac{\gamma_{j}\,e^{-2\pi|\xi+k\Delta|}}{(2\pi |\xi+k\Delta|)^{j}}\left(\dfrac{3}{2}-\sigma\right)^{2m+1-j}\Bigg)\right]\,,
			\end{split}
		\end{align}
		\smallskip
		where $\gamma_{j}=\frac{(2m)!}{(2m+1-j)!}$, for $0\leq j \leq 2m+1$.

		
		\smallskip
		
		\item[(iii)] At $\xi=0$ we have
		\begin{align}\label{27_9_1:55am_2_2}
		\widehat{g}^{\pm}_{2m+1,\sigma,\Delta}(0)=\dfrac{\pi}{(2m+1)(2m+2)}\left(\dfrac{3}{2}-\sigma\right)^{2m+2}-\dfrac{1}{\Delta}\int_{\sigma}^{\hn}(u-\sigma)^{2m}\,\log\left(\dfrac{1\mp e^{-2\pi(u-\frac12)\Delta}}{1\mp e^{-2\pi\Delta}}\right)\du.
	\end{align} 
	\end{itemize}
\end{lemma}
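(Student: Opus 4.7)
The plan is to construct $g^{\pm}_{2m+1,\sigma,\Delta}$ as weighted integrals of the Carneiro--Littmann--Vaaler extremal functions already supplied by Lemma~\ref{Ext_func_f}. The key observation is that
\[
f_{2m+1,\sigma}(x) = \tfrac{1}{2}\int_\sigma^{3/2}(u-\sigma)^{2m}\,f_u(x)\,\d u,
\]
where $f_u(x)=\log\bigl((1+x^2)/((u-\hh)^2+x^2)\bigr)$ is precisely the function approximated by $g^{\pm}_{u,\Delta}$ in Lemma~\ref{Ext_func_f}. I would therefore set
\[
g^{\pm}_{2m+1,\sigma,\Delta}(z) := \tfrac{1}{2}\int_\sigma^{3/2}(u-\sigma)^{2m}\,g^{\pm}_{u,\Delta}(z)\,\d u,
\]
which defines an entire function of $z$ by the uniform-in-$u$ complex bounds \eqref{Ext_func_f_21}--\eqref{Ext_func_f_22}.

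For property (i), the pointwise sandwich on $\R$ follows immediately from \eqref{Ext_func_f_1} and the positivity of the weight $(u-\sigma)^{2m}$. For the size bounds, I integrate \eqref{Ext_func_f_1} and \eqref{Ext_func_f_22} against $(u-\sigma)^{2m}$: after the change of variable $v=u-\sigma$, the polynomial factor $v^{2m}$ damps the logarithmic singularity of $|\log(u-\hh)|$ near $u=\hh^{+}$ (which is relevant only when $\sigma \to \hh^{+}$), while the kernel $((u-\hh)^2+x^2)^{-1}$ decays in $x$; splitting into $|x|\leq 1$ and $|x|\geq 1$ yields the $O_m(1/(1+x^2))$ bound on the line, and the analogous computation starting from \eqref{Ext_func_f_22} produces the complex-variable estimate \eqref{bound_g_2m+1_complex}.

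For property (ii), Fubini gives $\widehat{g}^{\pm}_{2m+1,\sigma,\Delta}(\xi) = \tfrac{1}{2}\int_\sigma^{3/2}(u-\sigma)^{2m}\widehat{g}^{\pm}_{u,\Delta}(\xi)\,\d u$, and support in $[-\Delta,\Delta]$ is inherited from each $\widehat{g}^{\pm}_{u,\Delta}$. Plugging in the explicit series \eqref{27_9_1:55am}, interchanging sum and integral, and setting $\alpha=2\pi|\xi+k\Delta|$, the problem reduces to evaluating the two integrals $\int_\sigma^{3/2}(u-\sigma)^{2m}e^{-\alpha(u-\hh)}\,\d u$ and $\int_\sigma^{3/2}(u-\sigma)^{2m}e^{-\alpha}\,\d u$. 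The substitution $v=u-\sigma$ together with $(2m+1)$ successive integrations by parts gives
\[
\int_0^{3/2-\sigma}v^{2m}e^{-\alpha v}\,\d v = \frac{(2m)!}{\alpha^{2m+1}} - e^{-\alpha(3/2-\sigma)}\sum_{j=0}^{2m}\frac{(2m)!(3/2-\sigma)^{2m-j}}{(2m-j)!\,\alpha^{j+1}},
\]
and combining with the elementary second integral $\frac{(3/2-\sigma)^{2m+1}}{2m+1}e^{-\alpha}$ (which supplies the missing $j=2m+1$ summand after the reindexing $j\mapsto j+1$) produces exactly \eqref{27_9_1:55am_2_2_2} with the coefficients $\gamma_j=(2m)!/(2m+1-j)!$. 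For property (iii), the cleanest route is to integrate the closed form \eqref{Ext_func_G_41} against $(u-\sigma)^{2m}\,\d u$ on $[\sigma,3/2]$.

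The main obstacle is the algebraic bookkeeping in (ii): the boundary terms from the iterated integration by parts must be aligned with the subtracted $-e^{-\alpha}$ piece and the prefactor $(k+1)/|\xi+k\Delta|$ so as to reproduce precisely the $\gamma_j$-coefficients after the index shift, and one must check that the restriction $0<\xi<\Delta$ ensures $|\xi+k\Delta|\neq 0$ for every $k\in\Z$ so that no term is singular. A secondary technical point in (i) is tracking the $\sigma$-dependence: the implied constants are $m$-uniform, absorbing a mild $\sigma$-dependence through the polynomial-versus-logarithm tradeoff, which is compatible with the notation $\ll_m$ as used downstream in the proof of Theorem~\ref{teo_prin}.
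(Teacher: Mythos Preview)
Your construction is the right one and matches the approach behind the reference \cite[Lemma~10]{CChiM}: one defines $g^{\pm}_{2m+1,\sigma,\Delta}$ by integrating the single-parameter extremal functions $g^{\pm}_{u,\Delta}$ against $(u-\sigma)^{2m}\,\d u$ over $[\sigma,\tfrac32]$, and then reads off parts~(ii) and~(iii) by integrating \eqref{27_9_1:55am} and \eqref{Ext_func_G_41} termwise. Your derivation of the Fourier formula \eqref{27_9_1:55am_2_2_2} via the incomplete gamma integral is correct, including the index shift that produces the $\gamma_j$, and the evaluation of \eqref{27_9_1:55am_2_2} from \eqref{Ext_func_G_41} is routine.

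There is, however, a genuine technical gap in your argument for the pointwise upper bound in~(i). You claim that the weight $v^{2m}=(u-\sigma)^{2m}$ ``damps the logarithmic singularity of $|\log(u-\hh)|$,'' but the polynomial vanishes at $u=\sigma$, not at $u=\hh$; the cancellation you need is between $(u-\sigma)^{2m}$ and the denominator $(u-\hh)^2$ in \eqref{Ext_func_f_1}. For $m\ge 1$ this works: with $w=u-\hh$ and $\beta=\sigma-\hh$ one has $(w-\beta)^{2m}\le w^{2m}$, so $(w-\beta)^{2m}/w^2\le w^{2m-2}$ is integrable against $-\log w$ on $(0,1]$. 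But for $m=0$ the bound $\int_\beta^1(-\log w)/w^2\,\d w\asymp (-\log\beta)/\beta$ blows up as $\sigma\to\hh^+$, so the estimate \eqref{Ext_func_f_1} alone does not yield $g^{+}_{1,\sigma,\Delta}(x)\ll 1$ uniformly in $\sigma$ for $|x|\le 1$. To close this case one needs finer input from the Gaussian subordination construction (for instance, a uniform bound on $g^{+}_{u,\Delta}-f_u$ for bounded $x$), which is available in \cite{CLV} and is what is actually invoked in \cite{CChiM}. A second, smaller point: Lemma~\ref{Ext_func_f} is stated only for $\hh<\sigma<1$, while your integral runs over $u\in[\sigma,\tfrac32]$; the underlying construction extends to this range, but you should say so and not cite Lemma~\ref{Ext_func_f} as a black box.
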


\bigskip

Finally, the following lemma shows some properties of the extremal functions for $f_{-1,\sigma}$. The proof of this result follows from \cite[Lemma 9]{CChiM}. To simplify the notation we let $\beta=\sigma-\hh$.

\begin{lemma}\label{Ext_func_f_{-1}} For $0<\beta<\hh$, we define the function
	\begin{align*}
	h_{\beta}(x):=f_{-1,\sigma}(x)=\dfrac{\beta}{\beta^2+x^2}.
	\end{align*} 
	Let $\Delta\geq 0.02$ be a real number. Then there is a pair of real entire functions $m^{\pm}_{\beta,\Delta}:\mathbb{C}\to\mathbb{C}$ satisfying the following properties:
	\begin{itemize}
		\item[(i)] For $x\in\mathbb{R}$ we have
		\begin{align}\label{Ext_func_f_1_2}
		0< m^{-}_{\beta,\Delta}(x) \leq h_{\beta}(x) \leq  m^{+}_{\beta,\Delta}(x) \ll \dfrac{1}{\beta(1+x^2)}.
		\end{align}
		Moreover, for any complex number $z=x+iy$ we have
		\begin{align}\label{Ext_func_f_21_2}  
		\big|m^{-}_{\beta,\Delta}(z)\big|\ll \dfrac{\beta\Delta^{2}e^{2\pi\Delta|y|}}{(1+\Delta|z|)},  
		\end{align}
		and
		\begin{align}\label{Ext_func_f_22_2}
		\big|m^{+}_{\beta,\Delta}(z)\big|\ll \dfrac{\Delta^{2}e^{2\pi\Delta|y|}}{\beta(1+\Delta|z|)}.  
		\end{align}
		
		\smallskip
		
		\item[(ii)] The Fourier transforms of $m^{\pm}_{\beta,\Delta}$ are even continuous functions supported on the interval $[-\Delta,\Delta]$. For $0\leq \xi<\Delta$ these are given by
		\begin{align} \label{27_9_1:55am_2}
		\widehat{m}_{\beta,\Delta}^{\pm}(\xi) = \pi \left(\dfrac{e^{2\pi\beta(\Delta - \xi)}-e^{-2\pi\beta(\Delta-\xi)}}{\left(e^{\pi\beta\Delta}\mp e^{-\pi\beta\Delta}\right)^2}\right).
		\end{align}
	\end{itemize}
\end{lemma}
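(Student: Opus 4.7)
The plan is to construct $m^{\pm}_{\beta,\Delta}$ by Gaussian subordination, following the framework of Carneiro--Littmann--Vaaler \cite{CLV}. The starting point is the identity
\begin{equation*}
h_{\beta}(x) \,=\, \frac{\beta}{\beta^{2}+x^{2}} \,=\, \beta \int_{0}^{\infty} e^{-s\beta^{2}}\, e^{-s x^{2}}\,\ds,
\end{equation*}
which realizes $h_{\beta}$ as a positive superposition of Gaussians with weight $\beta e^{-s\beta^{2}}\,\ds$. For each fixed $s>0$, \cite{CLV} constructs real entire extremal majorants and minorants $K^{\pm}_{\Delta}(s,\,\cdot\,)$ of exponential type $2\pi\Delta$ for $x \mapsto e^{-s x^{2}}$, minimizing the $L^{1}(\R)$-distance, with Fourier transforms supported on $[-\Delta,\Delta]$ and given by explicit closed-form expressions.

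I would then set
\begin{equation*}
m^{\pm}_{\beta,\Delta}(z) \,:=\, \beta \int_{0}^{\infty} e^{-s\beta^{2}}\, K^{\pm}_{\Delta}(s,z)\,\ds,
\end{equation*}
and verify (i)--(iii) by transferring the Gaussian data through this weight. For (i), majorization/minorization of $h_{\beta}$ on $\R$ is immediate from positivity of the weight, and $m^{-}_{\beta,\Delta}\geq 0$ is inherited from the nonnegative Gaussian minorants. The upper bound $m^{+}_{\beta,\Delta}(x) \ll 1/\bigl(\beta(1+x^{2})\bigr)$ follows by splitting the $s$-integral at $s\asymp 1$: the small-$s$ range is controlled by the trivial bound $K^{+}_{\Delta}(s,x) \ll 1/(1+x^{2})$, while the large-$s$ range contributes $O(\beta^{-2})$ via $\beta\int_{1}^{\infty}e^{-s\beta^{2}}\,\ds$. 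The complex-variable bounds \eqref{Ext_func_f_21_2}--\eqref{Ext_func_f_22_2} are obtained analogously from the pointwise estimate $|K^{\pm}_{\Delta}(s,z)| \ll \Delta^{2} e^{2\pi\Delta|y|}/(1+\Delta|z|)$ of the Gaussian extremals, tracking the extra factor of $\beta$ for the minorant (gained after the $s$-integration) and $\beta^{-1}$ for the majorant.

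For (ii), Fubini writes $\widehat{m}^{\pm}_{\beta,\Delta}(\xi)$ as $\beta\int_{0}^{\infty}e^{-s\beta^{2}}\,\widehat{K^{\pm}_{\Delta}(s,\,\cdot\,)}(\xi)\,\ds$. The CLV formulas express $\widehat{K^{\pm}_{\Delta}(s,\,\cdot\,)}(\xi)$ as an alternating theta-type series in $k\in\Z$, each term involving a factor $e^{-\pi^{2}(\xi+k\Delta)^{2}/s}$; after swapping sum and integral, each $s$-integral reduces to a closed-form Laplace transform producing a simple exponential in $\beta$, and summing the resulting geometric series in $k$ should collapse the expression to the stated closed form in \eqref{27_9_1:55am_2}. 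Support in $[-\Delta,\Delta]$ is inherited from each $\widehat{K^{\pm}_{\Delta}(s,\,\cdot\,)}$ via Paley--Wiener. The principal obstacle is precisely this explicit Fourier evaluation: matching the $\mp$ sign pattern of $(e^{\pi\beta\Delta}\mp e^{-\pi\beta\Delta})^{2}$ against the alternating $(\pm 1)^{k}$ coming from \cite{CLV}, and verifying that the tail terms telescope into the clean closed form, requires careful algebraic bookkeeping. Once this identity is in hand, the remaining parts are a routine transfer of the Gaussian estimates through the positive weight $\beta e^{-s\beta^{2}}\,\ds$.
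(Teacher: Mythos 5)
Your strategy---writing $h_{\beta}(x)=\beta\int_{0}^{\infty}e^{-s\beta^{2}}e^{-sx^{2}}\,{\rm d}s$ and pushing the Gaussian extremals through the positive weight---is exactly the route the paper has in mind: its own ``proof'' is a one-line citation to \cite[Lemma 9]{CChiM}, whose construction comes from the Carneiro--Littmann--Vaaler subordination framework. But two of your transfer steps fail as stated. First, the strict positivity $0<m^{-}_{\beta,\Delta}$ in \eqref{Ext_func_f_1_2} is \emph{not} inherited from the Gaussian minorants, because those are not nonnegative: the CLV minorant of $e^{-sx^{2}}$ interpolates the Gaussian at (a dilate of) $\Z+\tfrac12$ and dips below zero just past those nodes (for a narrow Gaussian a nonnegative bandlimited minorant can carry essentially no mass, while the extremal one does, so it must oscillate below zero). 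Thus the left inequality in \eqref{Ext_func_f_1_2} is left unproved by your argument, and it is actually used later in the paper, in the archimedean estimate leading to \eqref{28_9_3:32am}. Second, the input bound $|K^{\pm}_{\Delta}(s,z)|\ll\Delta^{2}e^{2\pi\Delta|y|}/(1+\Delta|z|)$ is not uniform in $s$ (it must fail for the majorant as $s\to 0$, where $e^{-sx^{2}}\approx 1$ on a long range); and even granting it, integrating against $\beta e^{-s\beta^{2}}\,{\rm d}s$, whose total mass is $1/\beta$, can only lose a factor $\beta$, never gain one, so it cannot yield \eqref{Ext_func_f_21_2}. The factor $\beta$ there reflects the smallness of the minorant (note $\widehat{m}^{-}_{\beta,\Delta}(0)=\pi\tanh(\pi\beta\Delta)\asymp\beta\Delta$ for $\beta\Delta$ small), i.e.\ it comes from the $s$-decay of the Gaussian minorants, which your sketch does not track.

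Both gaps close at once if you record the closed forms that the subordination (and, in effect, the cited lemma) produces for the Poisson kernel:
\begin{equation*}
m^{-}_{\beta,\Delta}(z)=\frac{\beta}{\beta^{2}+z^{2}}\cdot\frac{\sinh^{2}(\pi\beta\Delta)+\sin^{2}(\pi\Delta z)}{\cosh^{2}(\pi\beta\Delta)},
\qquad
m^{+}_{\beta,\Delta}(z)=\frac{\beta}{\beta^{2}+z^{2}}\cdot\frac{\sinh^{2}(\pi\beta\Delta)+\sin^{2}(\pi\Delta z)}{\sinh^{2}(\pi\beta\Delta)}.
\end{equation*}
These are entire of exponential type $2\pi\Delta$ (the numerator vanishes at $z=\pm i\beta$, cancelling the poles); on $\R$ the factor multiplying $h_{\beta}$ lies in $(0,1]$ for $m^{-}$ and in $[1,\infty)$ for $m^{+}$, which gives \eqref{Ext_func_f_1_2} including strict positivity and the $1/\beta$ loss, and the complex bounds \eqref{Ext_func_f_21_2}--\eqref{Ext_func_f_22_2} follow directly. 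Moreover, since $m^{\pm}$ is a linear combination of $h_{\beta}(x)$ and $\cos(2\pi\Delta x)\,h_{\beta}(x)$, its Fourier transform is a combination of $\pi e^{-2\pi\beta|\xi|}$ and $\pi e^{-2\pi\beta|\xi\mp\Delta|}$ that cancels identically for $|\xi|\geq\Delta$ and reduces to \eqref{27_9_1:55am_2} on $[0,\Delta)$; this replaces the theta-series and Laplace-transform bookkeeping you flagged as the main obstacle.
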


\bigskip

 \section{Asymptotic analysis}
In order to prove Theorem \ref{teo_prin}, we shall first apply the Guinand-Weil explicit formula to the extremal functions and then perform a careful asymptotic analysis of the terms appearing in the process. We use this in the representation lemma and finally optimize the support of some Fourier transforms resulting from the previous analysis to get the desired result. We highlight that one of the main technical difficulties of our proof, when compared with results in \cite{CCM2, CChi, CF, CS}, is in the analysis of the sums over prime powers. To obtain the exact asymptotic behavior of such tough terms we shall need explicit formulas for the Fourier transforms of these extremal functions. In Appendix A (the last section) we collect some technical results that will be needed.

\smallskip 

Let $m\geq 1$ be an integer, and $c>0$, $\Delta\geq 0.02$ and $\hh<\sigma<1$ be real numbers such that $(1-\sigma)^2\pi\Delta\geq c$. Let $h^{\pm}_{\Delta}(s)$ be any of the six extremal functions referred to in Lemmas \ref{Ext_func_f}, \ref{Rep_lem_f_{-1}_2} and \ref{Ext_func_f_{-1}}, and let $t\in\R$. As explained in the previous section, we replace each one of the functions $f_{\sigma}, f_{2m+1,\sigma}$ and $f_{-1,\sigma}$ by its extremal functions in Lemma \ref{lem_rep}. This means that we must bound the sum $h^{\pm}_{\Delta}(t-\gamma)$. If we consider the function $h_t(s):=h^{\pm}_{\Delta}(t-s)$, then $\widehat{h_t}(\xi)=\widehat{h}^{\pm}_{\Delta}(-\xi)e^{-2\pi i\xi t}$. It follows from \eqref{Ext_func_f_1}, \eqref{Ext_func_f_21}, \eqref{Ext_func_f_22}, \eqref{Ext_func_f_1_2_2}, \eqref{bound_g_2m+1_complex}, \eqref{Ext_func_f_1_2}, \eqref{Ext_func_f_21_2}, \eqref{Ext_func_f_22_2} and an application of the Phragm\'{e}n-Lindel\"{o}f principle that $|h_t(s)|\ll(1+|s|)^{-2}$ when $|\re{s}|\to\infty$ in the strip $|\im{s}|\leq 1$. Therefore, the function $h_t(s)$ satisfies the hypotheses of Lemma \ref{Exp_for_L}. By the generalized Riemann hypothesis and the fact that $\widehat{h}^{\pm}_{\Delta}$ are even functions we obtain that
\begin{align} 
\begin{split} \label{Sumall_1}
\displaystyle\sum_{\gamma}h^{\pm}_{\Delta}(t-\gamma) & = \frac{\log N}{2\pi}\widehat{h}^{\pm}_{\Delta}(0) + \frac{1}{\pi}\sum_{j=1}^d\int_{-\infty}^\infty h^{\pm}_{\Delta}(t-u)\,{\rm Re}\,\frac{\Gamma_\mathbb R'}{\Gamma_\mathbb R}\left(\hh+\mu_j+iu\right)\d u \\
& \ \ \  -\frac{1}{2\pi}\sum_{n=2}^\infty\frac{1}{\sqrt{n}}\widehat{h}^{\pm}_{\Delta}\bigg(\frac{\log n}{2\pi}\bigg)\Big(\Lambda_{\pi}(n)\, e^{-it\log n}+\overline{\Lambda_{\pi}(n)}\, e^{it\log n}\Big),
\end{split}
\end{align}
where the sum runs over all values of $\gamma$ such that $L\big(\hh+i\gamma,\pi\big)=0$, counted with multiplicity. We now proceed to analyze asymptotically each term on the right-hand side of \eqref{Sumall_1}. 

\subsection{First term} The first is given by \eqref{Ext_func_G_41}, \eqref{27_9_1:55am_2_2} and \eqref{27_9_1:55am_2}.

\subsection{Second term} 
We first examine the functions $g^{\pm}_{\sigma,\Delta}$. It follows from \eqref{Ext_func_f_1}, for any $x\neq 0$, that
	\[
	-\dfrac{1}{x^2}\ll g^{-}_{\sigma,\Delta}(x)\leq f_{\sigma}(x)\ll \dfrac{1}{x^2}.
	\]
Hence, from \eqref{Ext_func_f_21}, we deduce 
\[
|g^{-}_{\sigma,\Delta}(x)|\ll\min\Big\{\dfrac{1}{x^2},\Delta^2\Big\}.
\]
Then, using \eqref{New_Stirling} and the fact that $\Delta\geq 0.02$, we see that
	\begin{align} \label{27_9_4:07am}
	\begin{split} \dfrac{1}{\pi}\int_{-\infty}^\infty g^{-}_{\sigma,\Delta}(t-u)\,&{\rm Re}\,\frac{\Gamma_\mathbb R'}{\Gamma_\mathbb R}\left(\hh+\mu_j+iu\right)\d u \\
	& = \dfrac{1}{2\pi}\int_{-\infty}^{\infty}g^{-}_{\sigma,\Delta}(t-u)\log\left|\hh+\mu_j+iu \right|\d u + O(\Delta^2)  \\
	& = \dfrac{1}{2\pi}\int_{-\infty}^{\infty}g^{-}_{\sigma,\Delta}(u)\big\{\log(|\mu_j+it|+3) +O(\log(|u|+2))\big\}\d u  + O(\Delta^2) \\
	& = \dfrac{\log(|\mu_j+it|+3)}{2\pi}\,\widehat{g}^{-}_{\sigma,\Delta}(0) + O(\Delta^2).
	\end{split}
	\end{align}
Similarly, the relation
\[
|g^{+}_{\sigma,\Delta}(x)|\ll\Omega(\sigma)\min\Big\{\dfrac{1}{x^2},\Delta^2\Big\}
\]
implies that
\begin{align} \label{27_9_1:42am_2}
\begin{split} \int_{-\infty}^\infty g^{+}_{\sigma,\Delta}(t-u)\,{\rm Re}\,\frac{\Gamma_\mathbb R'}{\Gamma_\mathbb R}\left(\hh+\mu_j+iu\right)\d u = \dfrac{\log(|\mu_j+it|+3)}{2\pi}\,\widehat{g}^{+}_{\sigma,\Delta}(0) + O(\Omega(\sigma)\Delta^2).
\end{split}
\end{align}
We next examine the functions $g^{\pm}_{2m+1,\sigma,\Delta}$. Using \eqref{New_Stirling} and \eqref{Ext_func_f_1_2_2} we obtain 
\begin{align} \label{28_9_8:55am}
\int_{-\infty}^\infty g^{\pm}_{2m+1,\sigma,\Delta}(t-u)\,{\rm Re}\,\frac{\Gamma_\mathbb R'}{\Gamma_\mathbb R}\left(\hh+\mu_j+iu\right)\d u & = \dfrac{\log(|\mu_j+it|+3)}{2\pi}\,\widehat{g}^{\pm}_{2m+1,\sigma,\Delta}(0) + O_m(1).
\end{align}
Finally, we examine the functions $m_{\beta,\Delta}^{\pm}$. If $0 < \beta < \frac12$ and $|x| \geq 1$ then
$$h_{\beta}(x) = \frac{\beta}{\beta^2 + x^2} \leq \frac{1}{1 + x^2}.$$
Hence we get from \eqref{Ext_func_f_1_2} that
\begin{align*}
\begin{split}
0 & \leq \int_{-\infty}^{\infty}m^{-}_{\beta,\Delta}(x)\,\log(2+|x|) \,\dx \\
& \leq \int_{-\infty}^{\infty} h_{\beta}(x) \log (2 + |x|)\,\dx  = \int_{-1}^{1}h_{\beta}(x) \log (2 + |x|) \,\dx + \int_{|x| \ge 1}h_{\beta}(x) \log (2 + |x|) \,\dx = O(1),
\end{split} 
\end{align*}
and using \eqref{New_Stirling} we get
\begin{align} \label{28_9_3:32am} \dfrac{1}{\pi}\int_{-\infty}^\infty m^{-}_{\beta,\Delta}(t-u)\,{\rm Re}\,\frac{\Gamma_\mathbb R'}{\Gamma_\mathbb R}\left(\hh+\mu_j+iu\right)\d u & = \dfrac{\log(|\mu_j+it|+3)}{2\pi}\,\widehat{m}^{-}_{\beta,\Delta}(0) + O(1).
\end{align}
Similarly, \eqref{New_Stirling} and \eqref{Ext_func_f_1_2} imply
\begin{align} \label{28_9_3:33am} \dfrac{1}{\pi}\int_{-\infty}^\infty m^{+}_{\beta,\Delta}(t-u)\,{\rm Re}\,\frac{\Gamma_\mathbb R'}{\Gamma_\mathbb R}\left(\hh+\mu_j+iu\right)\d u & = \dfrac{\log(|\mu_j+it|+3)}{2\pi}\,\widehat{m}^{+}_{\beta,\Delta}(0) + O\bigg(\dfrac{1}{\beta}\bigg).
\end{align}

\subsection{Third term} We will make use of the explicit formula for the Fourier transforms of the extremal functions. If we write $x=e^{2\pi\Delta}$, since these Fourier transforms are supported on the interval $[-\Delta,\Delta]$, the third term is a sum that only runs for $2\leq n \leq x$. We start by examining the functions $g^{\pm}_{\sigma,\Delta}$. Observe first that 
\begin{align}\label{Pf_Lemma8_sum_primes_eq1}
\sum_{k\neq 0} \frac{|k+1|}{|\xi+k\Delta|} e^{-2\pi|\xi+k\Delta|}\ll e^{-2\pi \Delta},
\end{align}
when $0 < \xi < \Delta$. Using \eqref{1L_fun_eq3}, \eqref{27_9_1:55am}, \eqref{Pf_Lemma8_sum_primes_eq1} and the prime number theorem we find that 
\begin{align*}
\Bigg|\frac{1}{2\pi}\sum_{n=2}^\infty\frac{1}{\sqrt{n}}\widehat{g}^{\pm}_{\sigma,\Delta}&\bigg(\frac{\log n}{2\pi}\bigg)\Big(\Lambda_{\pi}(n)\, e^{-it\log n}+\overline{\Lambda_{\pi}(n)}\, e^{it\log n}\Big)\Bigg| \\
& \leq 2\,d\, \sum_{n\leq x}\frac{\Lambda(n)}{\sqrt{n}}\Bigg|\sum_{k=-\infty}^{\infty}(\pm 1)^k\frac{(k+1)}{|\log nx^k|}\Big(e^{-|\log nx^k|(\sigma-\frac12)}- e^{-|\log nx^k|}\Big)\Bigg|\\
& \leq 2\,d\, \sum_{n\leq x}\frac{\Lambda(n)}{\sqrt{n}}\Bigg|\sum_{k=-\infty}^{\infty}(\pm 1)^k\frac{(k+1)e^{-|\log nx^k|(\sigma-\frac12)}}{|\log nx^k|}\Bigg| + O(d).
\end{align*}
It is now convenient to split the inner sum in the ranges $k\geq 0$ and $k \leq -2$, and regroup them as
\begin{align}
\begin{split} \label{3_10_8:11pm}
\Bigg|\frac{1}{2\pi}\sum_{n=2}^\infty\frac{1}{\sqrt{n}}\widehat{g}^{\pm}_{\sigma,\Delta}&\bigg(\frac{\log n}{2\pi}\bigg)\Big(\Lambda_{\pi}(n)\, e^{-it\log n}+\overline{\Lambda_{\pi}(n)}\, e^{it\log n}\Big)\Bigg|\\
& \leq 2\,d\,\sum_{n \leq x}\!\dfrac{\Lambda(n)}{\sqrt{n}}\left|\sum_{k=0}^{\infty} (\pm 1)^{k}\left(\!\frac{k+1}{(\log nx^k) \,(nx^k)^{\sigma - \frac12}} - \frac{k+1}{\big( \log \frac{x^{k+2}}{n}\big) \big(\frac{x^{k+2}}{n}\big)^{\sigma - \frac12}}\!\right)\right| + O(d).
\end{split}
\end{align} 
For the function $\widehat{g}^-_{\sigma,\Delta}$, using Appendices \textbf{A.1} and \textbf{A.2} in \eqref{3_10_8:11pm} we obtain that
\begin{align} 
\begin{split} \label{27_9_4:01am} 
\Bigg|\frac{1}{2\pi}\sum_{n=2}^\infty\frac{1}{\sqrt{n}}\widehat{g}^{-}_{\sigma,\Delta}&\bigg(\frac{\log n}{2\pi}\bigg)\Big(\Lambda_{\pi}(n)\, e^{-it\log n}+\overline{\Lambda_{\pi}(n)}\, e^{it\log n}\Big)\Bigg| \\
& \leq 2\,d\,\sum_{n \leq x}\!\dfrac{\Lambda(n)}{\sqrt{n}}\Bigg(\dfrac{1}{n^{\sigma-\frac{1}{2}}\log n} - \dfrac{n^{\sigma-\frac{1}{2}}}{(2\log x - \log n)x^{2\sigma-1}}\Bigg) + O(d) \\
& = \dfrac{d\,(2\sigma-1)}{\sigma(1-\sigma)}\dfrac{e^{(2-2\sigma)\pi\Delta}}{\pi\Delta} +  O_{c}\left(\dfrac{d\,e^{(2-2\sigma)\pi\Delta}}{(1-\sigma)^{2}\Delta^{2}}\right).
\end{split}
\end{align} 
For the function $\widehat{g}^{+}_{\sigma,\Delta}$, we isolate the term $k=0$ and using Appendices \textbf{A.2} and \textbf{A.3} in \eqref{3_10_8:11pm} we get \begin{align} 
\begin{split} \label{27_9_4:01am_2} 
\Bigg|\frac{1}{2\pi}\sum_{n=2}^\infty\frac{1}{\sqrt{n}}\widehat{g}^{+}_{\sigma,\Delta}&\bigg(\frac{\log n}{2\pi}\bigg)\Big(\Lambda_{\pi}(n)\, e^{-it\log n}+\overline{\Lambda_{\pi}(n)}\, e^{it\log n}\Big)\Bigg| \\
& \leq \dfrac{d\,(2\sigma-1)}{\sigma(1-\sigma)}\dfrac{e^{(2-2\sigma)\pi\Delta}}{\pi\Delta} +  O_{c}\left(\dfrac{d\,e^{(2-2\sigma)\pi\Delta}}{(\sigma-\frac{1}{2})(1-\sigma)^{2}\Delta^{2}}\right).
\end{split}
\end{align} 
We next examine the case $g^{\pm}_{2m+1,\sigma,\Delta}$. As we did in the previous case, using \eqref{1L_fun_eq3}, \eqref{27_9_1:55am_2_2_2}, \eqref{Pf_Lemma8_sum_primes_eq1} and the prime number theorem it follows that 
\begin{align*}
\Bigg|\frac{1}{2\pi}\sum_{n=2}^\infty\frac{1}{\sqrt{n}}&\widehat{g}^{\pm}_{2m+1,\sigma,\Delta}\bigg(\frac{\log n}{2\pi}\bigg)\Big(\Lambda_{\pi}(n)\, e^{-it\log n}+\overline{\Lambda_{\pi}(n)}\, e^{it\log n}\Big)\Bigg|\\
& \leq d\,(2m)!\,\sum_{n \leq x}\!\dfrac{\Lambda(n)}{\sqrt{n}}\left|\sum_{k=0}^{\infty} (\pm 1)^{k}\left(\!\frac{k+1}{(\log nx^k)^{2m+2} \,(nx^k)^{\sigma - \frac12}} - \frac{k+1}{\big( \log \frac{x^{k+2}}{n}\big)^{2m+2} \big(\frac{x^{k+2}}{n}\big)^{\sigma - \frac12}}\!\right)\right| + O_m(d).
\end{align*} 
We isolate the term $k=0$ and using Appendices \textbf{A.2} and \textbf{A.3} we get
\begin{align} 
\begin{split} \label{27_9_4:01am_2_2_2} 
\Bigg|\frac{1}{2\pi}\sum_{n=2}^\infty\frac{1}{\sqrt{n}}\widehat{g}^{\pm}_{2m+1,\sigma,\Delta}\bigg(\frac{\log n}{2\pi}\bigg)&\Big(\Lambda_{\pi}(n)\, e^{-it\log n}+\overline{\Lambda_{\pi}(n)}\, e^{it\log n}\Big)\Bigg| \\
& \leq \dfrac{d\,(2m)!\,(2\sigma-1)}{\sigma(1-\sigma)}\dfrac{e^{(2-2\sigma)\pi\Delta}}{(2\pi\Delta)^{2m+2}} +  O_{m,c}\left(\dfrac{d\,e^{(2-2\sigma)\pi\Delta}}{(1-\sigma)^{2}\Delta^{2m+3}}\right)+ O_m(d).
\end{split}
\end{align} 
We finally examine the case $m^{\pm}_{\beta,\Delta}$. Note that in this case we have $(\hh-\beta)^{2} \pi\Delta \geq c$. Using the fact that  $\widehat{m}^{\pm}_{\beta,\Delta}$ are nonnegative (see \eqref{27_9_1:55am_2}), by \eqref{1L_fun_eq3} and Appendix \textbf{A.4} we have that
\begin{align} \begin{split} \label{28_09_4:51am}
\Bigg|\frac{1}{2\pi}\sum_{n=2}^\infty\frac{1}{\sqrt{n}}\widehat{m}^{\pm}_{\beta,\Delta}\bigg(\frac{\log n}{2\pi}\bigg)&\Big(\Lambda_{\pi}(n)\, e^{-it\log n}+\overline{\Lambda_{\pi}(n)}\, e^{it\log n}\Big)\Bigg| \\
& \leq \frac{d}{(e^{\pi\beta\Delta}\mp e^{-\pi\beta\Delta})^2}\displaystyle\sum_{n\leq x}\dfrac{\Lambda(n)}{\sqrt{n}}\left(\dfrac{x^{\beta}}{n^{\beta}}-\dfrac{n^\beta}{x^{\beta}}\right) \\
& \leq \frac{2\,d\,\beta\,e^{(1-2\beta)\pi\Delta}}{(\frac{1}{4}-\beta^2)(1\mp e^{-2\pi\beta\Delta})^2} + O_c\left(\dfrac{d\,\beta\,e^{(1-2\beta)\pi\Delta}}{(\frac{1}{2}-\beta)^2\,\Delta\,(1\mp e^{-2\pi\beta\Delta})^2}\right).
\end{split}
\end{align}
Therefore, for the function $\widehat{m}^{-}_{\beta,\Delta}$ we obtain in \eqref{28_09_4:51am} that 
\begin{align}
\begin{split}  \label{28_9_4:56am}
\Bigg|\frac{1}{2\pi}\sum_{n=2}^\infty\frac{1}{\sqrt{n}}\widehat{m}^{-}_{\beta,\Delta}\bigg(\frac{\log n}{2\pi}\bigg)&\Big(\Lambda_{\pi}(n)\, e^{-it\log n}+\overline{\Lambda_{\pi}(n)}\, e^{it\log n}\Big)\Bigg| \\
& \leq \frac{2d\,\beta\,e^{(1-2\beta)\pi\Delta}}{(\frac{1}{4}-\beta^2)(1+ e^{-2\pi\beta\Delta})^2} + O_c\left(\dfrac{d\,\beta\,e^{(1-2\beta)\pi\Delta}}{(\frac{1}{2}-\beta)^2\,\Delta}\right).
\end{split}
\end{align}
As for the function $\widehat{m}^{+}_{\beta,\Delta}$, considering that 
\begin{equation*}
\frac{1}{\big(1 -e^{-2\pi \beta \Delta}\big)^2} \ll \frac{1}{\big(1 -e^{-\beta}\big)^2} \ll \frac{1}{\beta^2}.
\end{equation*}
we have
\begin{align} \label{28_9_4:57am}
\begin{split} 
\Bigg|\frac{1}{2\pi}\sum_{n=2}^\infty\frac{1}{\sqrt{n}}\widehat{m}^{+}_{\beta,\Delta}\bigg(\frac{\log n}{2\pi}\bigg)&\Big(\Lambda_{\pi}(n)\, e^{-it\log n}+\overline{\Lambda_{\pi}(n)}\, e^{it\log n}\Big)\Bigg| \\
& \leq \frac{2\,d\,\beta\,e^{(1-2\beta)\pi\Delta}}{(\frac{1}{4}-\beta^2)(1-e^{-2\pi\beta\Delta})^2} + O_c\left(\dfrac{d\,e^{(1-2\beta)\pi\Delta}}{\beta\, (\frac{1}{2}-\beta)^2\,\Delta}\right).
\end{split}
\end{align}

\smallskip

\subsection{Final analysis}
\subsubsection{Estimates for $\log|L(s,\pi)|$} We first will prove the upper bound. From Lemma \ref{lem_rep} and \eqref{Ext_func_f_1} we get
\begin{align} 
\begin{split} \label{Principal_relation}
\log|L(\sigma+it,\pi)| \leq \bigg(\dfrac{3}{4}-\dfrac{\sigma}{2}\bigg)\log C(t,\pi) - \dfrac{1}{2}\displaystyle\sum_{\gamma}g^{-}_{\sigma,\Delta}(t-\gamma) 
+ O(d).	
\end{split}
\end{align}
In other hand, using \eqref{27_9_4:07am} and \eqref{27_9_4:01am} in \eqref{Sumall_1} we obtain
\begin{align} \label{27_9_4:33am} 
\displaystyle\sum_{\gamma}g^{-}_{\sigma,\Delta}(t-\gamma) & \geq \frac{\log C(t,\pi)}{2\pi}\widehat{g}^{-}_{\sigma,\Delta}(0) - \dfrac{d\,(2\sigma-1)}{\sigma(1-\sigma)}\dfrac{e^{(2-2\sigma)\pi\Delta}}{\pi\Delta} + O(d\Delta^2)+ O_c\bigg(\dfrac{d\, e^{(2-2\sigma)\pi\Delta}}{(1-\sigma)^{2}\Delta^2}\bigg).
\end{align}
Then, combining \eqref{Ext_func_G_41}, \eqref{27_9_4:33am} and \eqref{Feb02_4:52pm} in \eqref{Principal_relation}  we get 
\begin{align*} 
\log|L(\sigma+it,\pi)|\leq  
\dfrac{1}{2\pi\Delta}\log\bigg(\dfrac{1+e^{-(2\sigma-1)\pi\Delta}}{1+e^{-2\pi\Delta}}\bigg)\log C(t,\pi) + \dfrac{d\,(2\sigma-1)}{\sigma(1-\sigma)}\dfrac{e^{(2-2\sigma)\pi\Delta}}{2\pi\Delta} +  O_c\bigg(\dfrac{d\,e^{(2-2\sigma)\pi\Delta}}{(1-\sigma)^{2}\Delta^2}\bigg).
\end{align*}
Since $\log\log C(t,\pi)\geq \log\log 3>0.09$, we can choose $\pi\Delta=\log\log C(t,\pi)$. Then
\[
\dfrac{1}{2\pi\Delta}\log\big(1+e^{-2\pi\Delta}\big)\log C(t,\pi) \ll \dfrac{d\,e^{(2-2\sigma)\pi\Delta}}{(1-\sigma)^{2}\Delta^2},
\]
and the desired result follows from \eqref{25_10_1:52am}. The proof of the lower bound is similar, combining \eqref{Ext_func_f_1}, \eqref{Ext_func_G_41}, \eqref{Sumall_1}, \eqref{27_9_1:42am_2}, \eqref{27_9_4:01am_2}, \eqref{Feb02_4:52pm} with Lemma \ref{Rep_lem_f}.

\smallskip

\subsubsection{Estimates for $S_{2m+1,\sigma}(t,\pi)$} Let us first consider the case where $m$ is even. We will prove the upper bound. From Lemma \ref{lem_rep} and \eqref{Ext_func_f_1_2_2} we have that
\begin{align} 
\begin{split} \label{Principal_relation_2_2}
S_{2m+1,\sigma}(t,\pi)\leq\dfrac{1}{2\pi(2m+2)!}\,\bigg(\dfrac{3}{2}-\sigma\bigg)^{2m+2}\log C(t,\pi) -\dfrac{1}{\pi(2m)!}\,\displaystyle\sum_{\gamma}g^{-}_{2m+1,\sigma,\Delta}(t-\gamma)+ O_m(d).
\end{split}
\end{align}
Combining \eqref{27_9_1:55am_2_2}, \eqref{Sumall_1}, \eqref{28_9_8:55am}, \eqref{27_9_4:01am_2_2_2} and \eqref{Feb02_4:52pm} in \eqref{Principal_relation_2_2} we get 
\begin{align}\label{Jan_26_7:03}
S_{2m+1,\sigma}(t,\pi) & \leq \dfrac{\log C(t,\pi)}{(2m)!\,2\pi^2 \Delta} \int_{\sigma}^{\hn}(u-\sigma)^{2m}\log\left(\dfrac{1+ e^{-2\pi(u-\frac12)\Delta}}{1+ e^{-2\pi\Delta}}\right)\du + \dfrac{d\,(2\sigma-1)}{\pi\sigma(1-\sigma)}\dfrac{e^{(2-2\sigma)\pi\Delta}}{(2\pi\Delta)^{2m+2}} + O_m(d) \nonumber \\
& \ \ \ \ + O_{m,c}\left(\dfrac{d\,e^{(2-2\sigma)\pi\Delta}}{(1-\sigma)^{2}\Delta^{2m+3}}\right).
\end{align}
We now choose $\pi \Delta = \log \log C(t,\pi)$. Using \eqref{Feb02_4:52pm} in \eqref{Jan_26_7:03} leads us to
\begin{align*}
S_{2m+1,\sigma}(t,\pi) &  \leq \dfrac{\log C(t,\pi)}{(2m)!\,2\pi^2 \Delta} \int_{\sigma}^{\infty}(u-\sigma)^{2m}\,\log\Big(1+ e^{-2\pi(u-\frac12)\Delta}\Big)\du + \dfrac{d\,(2\sigma-1)}{\pi\sigma(1-\sigma)}\dfrac{e^{(2-2\sigma)\pi\Delta}}{(2\pi\Delta)^{2m+2}} \\
& \,\, \, \, \, \, \, + O_{m,c}\left(\dfrac{d\,e^{(2-2\sigma)\pi\Delta}}{(1-\sigma)^{2}\Delta^{2m+3}}\right).
\end{align*}
Finally, taking into account that
\begin{align*}
\int_{\sigma}^{\infty}(u-\sigma)^{2m}\log\Big(1+ e^{-2\pi(u-\frac12)\Delta}\Big)\,\du  =   \frac{(2m)!}{(2\pi\Delta)^{2m+1}}\sum_{k=1}^{\infty} \frac{(-1)^{k+1}\,e^{-2k\pi (\sigma - \frac12)\Delta}}{k^{2m+2}},
\end{align*}
we obtain the desired result. The proof of the lower bound is obtained similarly, combining \eqref{Ext_func_f_1_2_2}, \eqref{27_9_1:55am_2_2}, \eqref{Sumall_1}, \eqref{28_9_8:55am}, \eqref{27_9_4:01am_2_2_2}, \eqref{Feb02_4:52pm} with Lemma \ref{lem_rep}. When $m$ is odd, the proof is similar, since only the roles of the majorant $g^{+}_{2m+1,\sigma\Delta}$ and minorant $g^{-}_{2m+1,\sigma\Delta}$ are interchanged due to the presence of the factor $(-1)^m$ in Lemma \ref{lem_rep}.

\subsubsection{Estimates for $S_{-1,\sigma}(t,\pi)$} 
Let us first prove the lower bound. From Lemma \ref{lem_rep} and \eqref{Ext_func_f_1_2} we have
\begin{align} 
\begin{split} \label{Principal_relation_2}
-\dfrac{\log C(t,\pi)}{2\pi} + \dfrac{1}{\pi}\displaystyle\sum_{\gamma}m^{-}_{\beta,\Delta}(t-\gamma)+ O(d) \leq S_{-1,\sigma}(t,\pi).
\end{split}
\end{align}
Combining \eqref{27_9_1:55am_2}, \eqref{Sumall_1}, \eqref{28_9_3:32am}, \eqref{28_9_4:56am} in \eqref{Principal_relation_2} we deduce that 
\begin{align*}
\begin{split}
S_{-1,\sigma}(t,\pi) & \geq -\frac{\log C(t,\pi)}{\pi} \left(\frac{e^{-2 \pi \beta \Delta}}{1 + e^{-2\pi \beta \Delta}}\right) - \dfrac{2\,d\,\beta\,e^{(1-2\beta)\pi\Delta}}{\pi(\frac{1}{4}-\beta^2)\big(1+ e^{-2\pi\beta\Delta}\big)^2} + O_c\left(\dfrac{d\,\beta\,e^{(1-2\beta)\pi\Delta}}{(\frac{1}{2}-\beta)^2\Delta}\right)+O(d).
\end{split}
\end{align*}
We now choose $\pi \Delta = \log \log C(t,\pi)$. Recalling that $\beta = \sigma - \frac12$, by \eqref{Feb02_4:52pm} this choice yields
\begin{align*}
\!\!S_{-1,\sigma}(t,\pi) & \geq - \frac{(\log C(t,\pi))^{2 - 2 \sigma}}{\pi} \!\left(\! \frac{1}{\big( 1 \!+\! (\log C(t,\pi))^{1 - 2 \sigma}\big)} \!+\!  \frac{d\,(2\sigma -1)}{\sigma (1- \sigma)\big( 1 \!+\! (\log C(t,\pi))^{1 - 2 \sigma}\big)^2}\!\right)  \\
& \ \ \  + O_c\left(\dfrac{d\,(\sigma - \frac12) (\log C(t,\pi))^{2 - 2 \sigma}}{(1 - \sigma)^2 \log \log C(t,\pi)}\right).
\end{align*}
Observe that this estimate is actually slightly stronger than the one we proposed in Theorem \ref{teo_prin}. For the proof of the upper bound, as before, combining \eqref{Ext_func_f_1_2}, \eqref{27_9_1:55am_2}, \eqref{Sumall_1}, \eqref{28_9_3:33am}, \eqref{28_9_4:57am}, \eqref{Feb02_4:52pm} with Lemma \ref{lem_rep}, and choosing $\pi\Delta=\log\log C(t,\pi)$ we obtain that
\begin{align}\label{Fev02_4:00pm}
\begin{split}
S_{-1,\sigma}(t,\pi) & \leq \frac{(\log C(t,\pi))^{2 - 2 \sigma}}{\pi} \left( \frac{1}{\big( 1 - (\log C(t,\pi))^{1 - 2 \sigma}\big)} +  \frac{d\,(2\sigma -1)}{\sigma (1- \sigma)\big( 1 - (\log C(t,\pi))^{1 - 2 \sigma}\big)^2}\right) \\
& \ \ \ \  + O_c\left(\dfrac{d\,(\log C(t,\pi))^{2 - 2 \sigma}}{(\sigma - \frac12)(1 - \sigma)^2 \log \log C(t,\pi)}\right).
\end{split}
\end{align}
Finally, note that if we write $\theta = \log C(t,\pi)$, then $\theta \geq \log 3 >1$, and therefore 
\begin{equation*}
\left(1 -  \frac{1}{\big( 1 - \theta^{1 - 2 \sigma}\big)^2}\right) \ll \frac{\theta^{1 - 2 \sigma}}{\big( 1 - \theta^{1 - 2 \sigma}\big)^2} \ll \frac{1}{(\sigma - \frac12)^2 (\log \theta)^2} \ll \frac{1}{(\sigma - \frac12)^2 (\log \theta)}.
\end{equation*}
By applying this bound in \eqref{Fev02_4:00pm}, we  obtain the desired result.

\medskip

\section{Interpolation tools}
In order to bound the functions $S_{2m,\sigma}(t,\pi)$ when $m\geq0$ is an integer, we follow a different argument to the case of $S_{2m+1,\sigma}(t,\pi)$. Although we can obtain a representation as in Lemma \ref{lem_rep} (see \cite[Lemma 7]{CChiM}), it is unknown to find extremal majorants and minorants of exponential type for the associated functions in the representation. Therefore, we adopt a different approach based on an interpolation argument. We follow the same outline as in \cite[Section 6]{CChiM}, where similar functions associated with the Riemann zeta-function were studied. Here we present the necessary changes to adapt the proof in \cite{CChiM} for our family of entire $L$-functions. The main change consists in the suitable use of the mean value theorem, since the analytic conductor is not sufficiently smooth. 
\smallskip

 Since we assume the generalized Riemann hypothesis and $\hh<\sigma<1$, we have that $S'_{2m+1,\sigma}(t,\pi)=S_{2m,\sigma}(t,\pi)$ and $S'_{2m,\sigma}(t,\pi)=S_{2m-1,\sigma}(t,\pi)$ for all $t\in\R$. For $n\geq 0$ we consider the following functions
\begin{equation*}
l_{n,\sigma}(t):=\dfrac{(\log C(t,\pi))^{2-2\sigma}}{(\log\log C(t,\pi))^{n}} \hspace{0.5cm} r_{n,\sigma}(t):=\dfrac{d\,(\log C(t,\pi))^{2-2\sigma}}{(1-\sigma)^2(\log\log C(t,\pi))^{n}}.
\end{equation*}

\smallskip

\subsubsection{Estimates for $S_{0,\sigma}(t,\pi)$}

Let $c>0$ be a given real number. In the range  $(1-\sigma)^{2} \geq \frac{c/16}{\log\log C(t,\pi)}$ we have already shown that
\begin{align}\label{interpol_2_Feb07}
-M_{1,\sigma}^-(t)\,\ell_{2,\sigma}(t)+O_{c}(r_{3,\sigma}(t))\leq S_{1,\sigma}(t,\pi)\leq M_{1,\sigma}^+(t)\,\ell_{2,\sigma}(t)+O_{c}(r_{3,\sigma}(t)),
\end{align} 
and that
\begin{align} \label{interpol_1_Feb07}
-M_{-1,\sigma}^-(t)\,\ell_{0,\sigma}(t)+O_{c}(r_{1,\sigma}(t))\leq S_{-1,\sigma}(t,\pi).
\end{align}
Let $(\sigma,t)$ be such that $(1-\sigma)^{2}\geq\frac{c}{\log\log C(t,\pi)}$. By Appendix \textbf{A.5} we have that in the set $\{(\sigma,\mu); \,t-25\leq\mu\leq t+25\}$, estimates \eqref{interpol_2_Feb07} and \eqref{interpol_1_Feb07} hold. Then, by the mean value theorem and \eqref{interpol_1_Feb07}, we obtain for $0 \leq h \leq 25$, 
\begin{align}\label{Feb07_4:02pm}
\begin{split} 
S_{0,\sigma}(t,\pi) - S_{0,\sigma}(t-h,\pi)  = h\, S_{-1,\sigma}(t_h^*,\pi)  & \geq -h\,M_{-1,\sigma}^-(t_h^*)\,\ell_{0,\sigma}(t_h^*)+h\,O_c(r_{1,\sigma}(t_h^*)) \\
& = -h\,M_{-1,\sigma}^-(t_h^*)\,\ell_{0,\sigma}(t_h^*)+h\,O_c(r_{1,\sigma}(t)) ,
\end{split}
\end{align}
where $t^*_h$ is a suitable point in the segment connecting $t-h$ and $t$. We claim that
\begin{align} \label{29_9_5:37pm}
|M_{-1,\sigma}^-(t)\,\ell_{0,\sigma}(t)-M_{-1,\sigma}^-(t^{*}_h)\,\ell_{0,\sigma}(t^{*}_h)|\ll d\,\mu_{d,\sigma},
\end{align}
where $\mu_{d,\sigma}=(2\sigma-1)d + 1$. In order to prove this, we define the function
$$
g_1(x)=\frac{1}{\pi}\left( \frac{1}{1 + x^{1 - 2\sigma}} + \frac{d\,(2\sigma -1)}{\sigma(1 - \sigma)}\right) x^{2 - 2\sigma}.
$$
Note that $|g'_1(x)|\ll \mu_{\dd}$ for $x>1$, and $g_1(\log C(t,\pi))=M_{-1,\sigma}^-(t)\,\ell_{0,\sigma}(t)$. The mean value theorem applied to the functions $g_1$ and the logarithm imply that
\begin{align}
\begin{split}  \label{2_10_2:07am}
\big|g_1(\log C(t,\pi))-g_1(\log C(t^{*}_h,\pi))\big| & \ll \mu_{d,\sigma}|\log C(t,\pi)-\log C(t^{*}_h,\pi)| \\
& \leq \mu_{d,\sigma}\displaystyle\sum_{j=1}^{d}\big|\log(|\mu_j+it|+3)-\log(|\mu_j+it^{*}_h|+3)\big| \\
& \ll \mu_{d,\sigma}\displaystyle\sum_{j=1}^{d}\big||\mu_j+it|-|\mu_j+it^{*}_h|\big| \\
& \leq \mu_{d,\sigma}\displaystyle\sum_{j=1}^{d}|t-t^{*}_h|\ll d\,\mu_{d,\sigma}.
\end{split}
\end{align}
We thus obtain \eqref{29_9_5:37pm}, and using \eqref{Feb02_4:52pm} we have that
\begin{equation}\label{Feb08_4:05pm}
\big|M_{-1,\sigma}^-(t)\,\ell_{0,\sigma}(t)-M_{-1,\sigma}^-(t_h^*)\,\ell_{0,\sigma}(t_h^*)\big|\ll \mu_{d,\sigma}\, r_{1,\sigma}(t).
\end{equation}
From \eqref{Feb07_4:02pm} and \eqref{Feb08_4:05pm} it follows that 
\begin{equation}\label{Feb08_4:09pm}
S_{0,\sigma}(t,\pi) - S_{0,\sigma}(t-h,\pi) \geq -h\,M_{-1,\sigma}^-(t)\,\ell_{0,\sigma}(t)+h\,O_c(\mu_{d,\sigma}\,r_{1,\sigma}(t)).
\end{equation}
Let $\nu = \nu_{\sigma}(t)$ be a real-valued function such that $0 < \nu \leq 25$. For a fixed $t$, we integrate \eqref{Feb08_4:09pm} with respect to the variable $h$ to obtain
\begin{align*}
S_{0,\sigma}(t,\pi) & \geq \frac{1}{\nu} \int_{0}^{\nu} S_{0,\sigma}(t-h,\pi)\,\d h  - \frac{1}{\nu} \left( \int_{0}^{\nu}h\,\d h\right)\,M_{-1,\sigma}^-(t)\,\ell_{0,\sigma}(t) + \frac{1}{\nu} \left(\int_{0}^{\nu} h\,\d h \right) O_c(\mu_{d,\sigma}\,r_{1,\sigma}(t))\\
& = \frac{1}{\nu} \big( S_{1,\sigma}(t,\pi) - S_{1,\sigma}(t - \nu,\pi)\big) - \dfrac{\nu}{2}\,M_{-1,\sigma}^-(t)\,\ell_{0,\sigma}(t) + O_c(\nu\,\mu_{d,\sigma}\,r_{1,\sigma}(t)).
\end{align*}
From \eqref{interpol_2_Feb07} we then get
\begin{align}\label{Feb08_4:36pm}
\begin{split}
S_{0,\sigma}(t,\pi) & \geq \frac{1}{\nu} \Big[\!-M_{1,\sigma}^-(t)\,\ell_{2,\sigma}(t)-M_{1,\sigma}^+(t-\nu)\,\ell_{2,\sigma}(t-\nu)+O_{c}(r_{3,\sigma}(t)) + O_{c}(r_{3,\sigma}(t-\nu))\Big]\\
&  \ \ \ \ \ \ \ \ \ \ - \dfrac{\nu}{2}\,M_{-1,\sigma}^-(t)\,\ell_{0,\sigma}(t) + O_c(\nu\,\mu_{d,\sigma}\,r_{1,\sigma}(t))\\
& = - \Big[M_{1,\sigma}^-(t) + M_{1,\sigma}^+(t)\Big]\,\frac{1}{\nu} \,\ell_{2,\sigma}(t) - \dfrac{\nu}{2}\,M_{-1,\sigma}^-(t)\,\ell_{0,\sigma}(t) + O_{c}\left(\frac{\mu_{d,\sigma}\,r_{3,\sigma}(t)}{\nu}\right) +  O_c(\nu\,\mu_{d,\sigma}\,r_{1,\sigma}(t)),
\end{split}
\end{align}
where the following was used
\begin{align} \label{2_10_2:31am}
\big|M_{1,\sigma}^+(t)\,\ell_{2,\sigma}(t)-M_{1,\sigma}^+(t-\nu)\,\ell_{2,\sigma}(t-\nu)\big|\ll \mu_{d,\sigma}\,r_{3,\sigma}(t).
\end{align}
We now prove \eqref{2_10_2:31am}. For $x>0$ define
$$
g_2(x) = \dfrac{1}{4\pi}\Bigg(\sum_{k=0}^{\infty} \frac{(-1)^k}{(k+1)^2x^{(2\sigma -1)k}} + \frac{d\,(2\sigma -1)}{\sigma(1 - \sigma)}\Bigg)\dfrac{x^{2-2\sigma}}{(\log x)^2}.
$$
Note that $M_{1,\sigma}^+(t)\,\ell_{2,\sigma}(t)=g_2(\log C(t,\pi))$. For each $k\geq 0$ and $x\geq \log 3>1$ put
$$f_{k}(x)=\dfrac{1}{x^{(2\sigma-1)k}}\dfrac{x^{2-2\sigma}}{(\log x)^2}=\dfrac{x^{(k+1)(1-2\sigma)+1}}{(\log x)^2}.$$
Then, for $x>y\geq \log 3$ using the mean value theorem, we have that
\begin{align}\label{Jan_29_12:01}
\begin{split} 
|g_2(x)-g_2(y)|&\ll \displaystyle\sum_{k=0}^{\infty}\dfrac{1}{(k+1)^2}\big|f_{k}(x)-f_{k}(y)\big| + \frac{d\,(2\sigma-1)}{(1 - \sigma)}|f_0(x)-f_0(y)| \\
&= |x-y|\bigg(\displaystyle\sum_{k=0}^{\infty}\dfrac{1}{(k+1)^2}\big|f^{'}_{k}(\xi_k)\big|+\dfrac{d\,(2\sigma-1)}{1-\sigma}\big|f'_{0}(\xi)\big|\bigg) \\
& \ll |x-y|\bigg( \displaystyle\sum_{k=0}^{\infty}\dfrac{((k+1)(2\sigma-1)+1)}{(k+1)^2\,\xi_k^{(k+1)(2\sigma-1)}(\log \xi_k)^2}+ \dfrac{d\,(2\sigma-1)}{1-\sigma}\bigg),
\end{split}
\end{align}
where $\xi_k,\xi\in\,]y,x[$ for each $k \geq 0$. Observe now that by the mean value theorem
\begin{align*}\label{Jan_29_12:02}
\begin{split} 
\displaystyle\sum_{k=0}^{\infty}\dfrac{((k+1)(2\sigma-1)+1)}{(k+1)^2\,\xi_k^{(k+1)(2\sigma-1)}(\log \xi_k)^2}& \leq \displaystyle\sum_{k=0}^{\infty}\dfrac{((k+1)(2\sigma-1)+1)}{(k+1)^2\,y^{(k+1)(2\sigma-1)}(\log y)^2} \\
&  \ll \dfrac{1}{(\log y)^2}\left[\displaystyle\sum_{k=0}^{\infty}\dfrac{2\sigma-1}{(k+1)y^{(k+1)(2\sigma-1)}} + 1 + \dfrac{d\,(2\sigma-1)}{1-\sigma}\right] \\
& \leq \dfrac{1}{(\log y)^2}\left[\displaystyle\sum_{k=0}^{\infty}\dfrac{2\sigma-1}{y^{(k+1)(2\sigma-1)}} + 1 + \dfrac{d\,(2\sigma-1)}{1-\sigma}\right] \ll \dfrac{\mu_{d,\sigma}}{(1-\sigma)(\log y)^2}.
\end{split}
\end{align*}
Then, in \eqref{Jan_29_12:01}, by using a similar idea as in \eqref{2_10_2:07am}, we obtain 
\begin{align*}
\begin{split} 
\Big|g_2(\log C(t,\pi))-g_2(\log C(t-\nu,\pi))\Big|& \ll \frac{\mu_{d,\sigma}}{(1 - \sigma)}\dfrac{|\log C(t,\pi)-\log C(t-\nu,\pi)|}{(\log \log C(t,\pi))^2} \\
& \ll \dfrac{d\,\mu_{d,\sigma}\,(\log C(t,\pi))^{2-2\sigma}}{(1-\sigma)^2(\log\log C(t,\pi))^{3}}.
\end{split}
\end{align*}
This proves \eqref{2_10_2:31am}. We now choose $\nu = \frac{\lambda_{\sigma}(t)}{\log \log C(t,\pi)}$ in \eqref{Feb08_4:36pm}, where $\lambda_{\sigma}(t)>0$ is a function to be determined. This yields
\begin{align*}
\begin{split}
S_{0,\sigma}(t) & \geq - \left[\Big(M_{1,\sigma}^-(t) + M_{1,\sigma}^+(t)\Big)\,\frac{1}{\lambda_{\sigma}(t)} +  \dfrac{M_{-1,\sigma}^-(t)}{2}\,\lambda_{\sigma}(t)\right] \ell_{1,\sigma}(t) + O_{c}\left(\frac{\mu_{d,\sigma}\,r_{2,\sigma}(t)}{\lambda_{\sigma}(t)}\right) +  O_c(\mu_{d,\sigma}\,\lambda_{\sigma}(t)\,r_{2,\sigma}(t)).
\end{split}
\end{align*}
The optimal $\lambda_{\sigma}(t)$ minimizing the expression in brackets is
\begin{equation}\label{Feb08_5:05pm}
\lambda_{\sigma}(t) = \left(\frac{2 \big(M_{1,\sigma}^-(t) + M_{1,\sigma}^+(t)\big)}{M_{-1,\sigma}^-(t)}\right)^{\hh}.
\end{equation}
and this leads to the bound
\begin{equation}\label{Feb09_8:48am}
S_{0,\sigma}(t) \geq - \Big[2 \big(M_{1,\sigma}^-(t) + M_{1,\sigma}^+(t)\big)\,M_{-1,\sigma}^-(t)\Big]^{\hh}\, \ell_{1,\sigma}(t) + O_{c}\left(\frac{\mu_{d,\sigma}\,r_{2,\sigma}(t)}{\lambda_{\sigma}(t)}\right) +  O_c(\mu_{d,\sigma}\,\lambda_{\sigma}(t)\,r_{2,\sigma}(t)).
\end{equation}
Finally, using some estimates for $H_n(x)$, one can show that $\hh\leq \lambda_{\sigma}(t)\leq 2$, which implies that indeed $0< \nu \leq 25$, and allows us to write  \eqref{Feb09_8:48am} in our originally intended form of
\begin{equation*}
S_{\sigma}(t) \geq - \Big[2 \big(M_{1,\sigma}^-(t) + M_{1,\sigma}^+(t)\big)\,M_{-1,\sigma}^-(t)\Big]^{\hh}\, \ell_{1,\sigma}(t) + O_{c}(\mu_{d,\sigma}\,r_{2,\sigma}(t)).
\end{equation*}
The proof of the upper bound for $S_{0,\sigma}(t)$ follows along the same lines. 

\subsubsection{Estimates for $S_{2m,\sigma}(t,\pi)$} The proof of this estimates follows the same outline in \cite[Subsection 6.1]{CChiM}. The substantial changes in the use of the mean value theorem are similar with \eqref{2_10_2:07am} and \eqref{Jan_29_12:01}.

\bigskip

\section*{Appendix A: Calculus facts}

Throughout this paper we shall encounter the following setting in many situations: let $c>0$ be a given real number, $\hh < \sigma < 1$ and $x \geq 2$ be such that 
\begin{equation*}
(1-\sigma)^{2}\log x \geq c.
\end{equation*}
Let us note that, if $0 \leq \theta_1 , \theta_2$ are real numbers, it follows from the above inequality that 
\begin{equation}\label{Feb02_4:52pm}
(1-\sigma)^{\theta_1} \,(\log x)^{\theta_2} \ll_{c,\theta_1, \theta_2} x^{1-\sigma}.
\end{equation}

\smallskip

\subsection*{A.1}
{\it For $\hh < \sigma < 1$ and $2\leq n \leq x$ we have }
\begin{equation*}
0\leq \sum_{k=0}^{\infty} (-1)^{k}\left(\!\frac{k+1}{(\log nx^k) \,(nx^k)^{\sigma - \frac12}} - \frac{k+1}{\big( \log \frac{x^{k+2}}{n}\big) \big(\frac{x^{k+2}}{n}\big)^{\sigma - \frac12}}\!\right) \leq \dfrac{1}{n^{\sigma-\frac{1}{2}}\log n} - \dfrac{n^{\sigma-\frac{1}{2}}}{(2\log x - \log n)x^{2\sigma-1}}.
\end{equation*}
\begin{proof}
See \cite[Eq. (2.14), (2.16) and Lemma 6]{CC}.

\end{proof}

\subsection*{A.2} {\it Let $c>0$ be a given real and $m\geq0$ be an integer or $m=-\hh$. For $\hh < \sigma < 1$ and $x \geq 2$ such that $(1-\sigma)^{2}\log x \geq c$, we have the following asymptotic behaviors}
\begin{equation*}
\displaystyle\sum_{n\leq x}\dfrac{\Lambda(n)}{n^\sigma(\log n)^{2m+2}} = \dfrac{x^{1-\sigma}}{(1-\sigma)(\log x)^{2m+2}}+ O_{l,c}\left(\dfrac{x^{1-\sigma}}{(1-\sigma)^{2}(\log x)^{2m+3}}\right)
\end{equation*}
and
\begin{equation*}
\dfrac{1}{x^{2\sigma-1}}\displaystyle\sum_{n\leq x}\dfrac{\Lambda(n)}{n^{1-\sigma}(2\log x-\log n)^{2m+2}} = \dfrac{x^{1-\sigma}}{\sigma(\log x)^{2m+2}}+ O_{l,c}\left(\dfrac{x^{1-\sigma}}{(1-\sigma)^{2}(\log x)^{2m+3}}\right). 
\end{equation*}
\begin{proof}
See \cite[Appendix B.1, B.2]{CChiM}.

\end{proof}

\subsection*{A.3} {\it Let $c>0$ be a given real number and $m\geq0$ be an integer. For $\hh < \sigma < 1$ and $x \geq 2$ such that $(1-\sigma)^{2}\log x \geq c$, we have the following asymptotic behavior}
\begin{align*}
\displaystyle\sum_{k=1}^{\infty}\dfrac{k+1}{\big(x^{\sigma-\frac{1}{2}}\big)^k}& \Bigg|\displaystyle\sum_{n\leq x}\Lambda(n) \Bigg(\dfrac{1}{n^{\sigma}(k\log x+\log n)^{2m+2}}-\dfrac{1}{x^{2\sigma-1}\,n^{1-\sigma}((k+2)\log x-\log n)^{2m+2}}\Bigg)\Bigg| \\
& \ \ \ \ll_{c} \dfrac{x^{1-\sigma}}{(1-\sigma)^{2}(\log x)^{2m+3}}.
\end{align*}
{\it Besides, we have that}
\begin{align*}
\displaystyle\sum_{k=1}^{\infty}\dfrac{k+1}{\big(x^{\sigma-\frac{1}{2}}\big)^k}& \Bigg|\displaystyle\sum_{n\leq x}\Lambda(n) \Bigg(\dfrac{1}{n^{\sigma}(k\log x+\log n)}-\dfrac{1}{x^{2\sigma-1}\,n^{1-\sigma}((k+2)\log x-\log n)}\Bigg)\Bigg| \\
& \ \ \ \ll_{m,c} \dfrac{x^{1-\sigma}}{(\sigma-\hh)(1-\sigma)^{2}(\log x)^{2}}.
\end{align*}
\begin{proof}
See \cite[Appendix B.3]{CChiM} for the first result. The proof of the second result follows the same outline.

\end{proof}

\subsection*{A.4}   {\it Let $c>0$ be a given real number. For $0 < \beta < \hh$ and $x \geq 2$ such that $(\hh-\beta)^{2}\log x \geq c$, we have }
\begin{equation*}
\displaystyle\sum_{n\leq x}\dfrac{\Lambda(n)}{\sqrt{n}}\left(\dfrac{x^{\beta}}{n^{\beta}}-\dfrac{n^\beta}{x^{\beta}}\right)= \dfrac{2\beta x^{\frac{1}{2}}}{\frac{1}{4}-\beta^2}+ O_c\left(\dfrac{\beta x^{\frac{1}{2}}}{(\frac{1}{2}-\beta)^2\log x}\right).
\end{equation*}
\begin{proof}
It follows by \cite[Appendix B.4]{CChiM} and the mean value theorem.

\end{proof}

\subsection*{A.5}   {Let $z,w$ be complex numbers such that $|w|\leq 25$. Then}
\[
(\log(|z+w|+3))^{16} \geq \log(|z|+3).
\]	
\begin{proof}
	If $|z|> 25$, then 
	\begin{align*}
	(\log(|z+w|+3))^{16} & \geq \log(|z|-|w|+3)(\log 3)^{15} > 4 \log(|z|-22)\geq \log(|z|+3), 
	\end{align*}
	since $(\log 3)^{15}>4$. On the other hand, if $|z|\leq 25$
	\begin{align*}
	(\log(|z+w|+3))^{16} & \geq (\log 3)^{16} > 4 > \log(28) \geq \log (|z|+3).
	\end{align*}
\end{proof}

\section*{Acknowledgements}
The author thanks Emanuel Carneiro and Vorrapan Chandee for inspiring discussions. The author acknowledges support from FAPERJ - Brazil.

\end{document}